\newtheorem{theorem}{Theorem}[section]
\newtheorem{lemma}{Lemma}[section]
\newtheorem{definition}{Definition}[section]
\newtheorem{remark}{Remark}[section]
\newtheorem{corollary}[lemma]{Corollary}
  \newcommand{\MB}{\mathcal{B}}
\newcommand{\tsigma}{\widetilde{\sigma}}
\newcommand{\MC}{\mathcal{C}}
\newcommand{\vg}{{\boldsymbol{\gamma}}}
\newcommand{\nnu}{{\boldsymbol{\nu}}}
\newcommand{\bbeta}{{\boldsymbol{\beta}}}
\newcommand{\T}{{\boldsymbol{\tau}}}
\newcommand{\Gammawedge}{$\Gamma_{\text{\rm wedge}}$\ }
\newcommand{\gammawedge}{\Gamma_{\text{\rm wedge}}}
\newcommand{\Gammashock}{$\Gamma_{\text{\rm shock}}$\ }
\newcommand{\gammashock}{\Gamma_{\text{\rm shock}}}
\newcommand{\gammasonic}{\Gamma_{\text{\rm sonic}}}
\def\be{\begin{equation}}
\def\ee{\end{equation}}
\def\ba{\begin{array}{lll}}
\def\ea{\end{array}}
\def\bc{\begin{cases}}
\def\ec{\end{cases}}
\def\bbm{\begin{bmatrix}}
\def\ebm{\end{bmatrix}}
\def\bpm{\begin{pmatrix}}
\def\epm{\end{pmatrix}}
\def\bvm{\begin{vmatrix}}
\def\evm{\end{vmatrix}}
\def\bin{\begin{enumerate}}
\def\ein{\end{enumerate}}
\def\bit{\begin{itemize}}
\def\eit{\end{itemize}}
\def\bid{\begin{description}}
\def\eid{\end{description}}
\renewcommand{\div}{\mathrm{div}}
\newcommand{\mr}{\mathbb{R}}
\newcommand{\deq}{:=}
\newcommand{\p}{\partial}
\newcommand{\pSi}{\varphi}
\renewcommand{\r}{\rho}
\newcommand{\g}{\gamma}
\renewcommand{\O}{\Omega}
\renewcommand{\L}{\Lambda}
\newcommand{\Gw}{\Gamma_{\text{\rm wedge}}}
\newcommand{\Gsh}{\Gamma_{\text{\rm shock}}}
\newcommand{\ra}{\rightarrow}
\newcommand{\ttau}{{\boldsymbol{\tau}}}
\numberwithin{equation}{section}
\numberwithin{figure}{section}
\begin{document}
	\title[Loss of regularity of solutions of the Lighthill Problem]{Loss of Regularity of Solutions of the Lighthill
		Problem for Shock Diffraction for Potential Flow}
	\author{Gui-Qiang Chen}
	\address{Gui-Qiang G. Chen, Mathematical Institute, University of Oxford,
		Oxford, OX2 6GG, UK;\,
		AMSS and UCAS, Chinese Academy of Sciences, Beijing 100190, China}
	\email{chengq@maths.ox.ac.uk}
	
	\author{Mikhail Feldman}
	\address{Mikhail Feldman, Department of Mathematics, University of Wisconsin, Madison, WI 53706-1388, USA}
	\email{feldman@math.wisc.edu}
	
	\author{Jingchen Hu}
	\address{Jingchen Hu, School of Mathematical Sciences, University of Science and Technology of China, Hefei, China}
	\email{jchu@mail.ustc.edu.cn}
	
	\author{Wei Xiang}
	\address{Wei Xiang, Department of Mathematics, City University of Hong Kong, Kowloon, Hong Kong, P. R. China}
	\email{weixiang@cityu.edu.hk}
	
	\date{\today}

\begin{abstract}
	We are concerned with the suitability of the main models of compressible fluid dynamics for the Lighthill problem for shock diffraction
	by a convex corned wedge, by studying the regularity of solutions of the problem, which can be formulated as a free boundary problem.
	In this paper, we prove that there is no regular solution that is subsonic up to the wedge corner
	for potential flow.
	This indicates that, if the solution is subsonic
	at the wedge corner, at least a characteristic discontinuity (vortex sheet or entropy wave)
	is expected to be generated,
	which is consistent with the experimental and computational results.
	Therefore, the potential flow equation is not suitable for the Lighthill problem so that
	the compressible Euler system must be considered.
	In order to achieve the non-existence result, a weak maximum principle for the solution is established,
	and several other mathematical techniques are developed.
	The methods and techniques developed here are also useful
	to the other problems with similar difficulties.
\end{abstract}

\keywords{Loss of regularity, free boundary problems, Lighthill problem,
nonlinear equations of second order,
mixed elliptic-hyperbolic type, degenerate elliptic equations,
conservation laws, potential flow equation,
shock diffraction, compressible flow}

\subjclass[2010]{Primary: 35M10, 35M12, 35B65, 35L65, 35L70, 35J70, 76H05, 35L67, 35R35;
Secondary: 35L15, 35L20, 35J67, 76N10, 76L05}
\maketitle

\section{Introduction}
We are concerned with the regularity of solutions of the Lighthill problem
for shock diffraction by a two-dimensional
convex cornered wedge, which is not only a longstanding open problem in fluid mechanics
but also fundamental in the mathematical theory of multidimensional conservation laws,
since the shock diffraction configurations are fundamental for the local structure
of general entropy solutions. The Lighthill problem can be formulated as a free boundary problem.

The main objective of this paper is to address the fundamental issue: which of the main models of compressible
fluid dynamics is suitable for the shock diffraction problem.
There are two main models: the compressible Euler system and the potential flow equation.
We prove that there is no regular solution of the shock diffraction problem
in the framework of potential flow.
In fact, our results show the limitation of the potential flow equation,
since solutions of this equation have a high regularity,
so that it is too rigid for the Lighthill problem.
This shows that the Euler system may be more stable
as it allows nonregular solutions.

The Euler equations for polytropic potential flow consist of
\begin{equation}\label{equ:1}
\partial_t\rho+\mbox{div}_{\mathbf{x}}(\rho\nabla_{\mathbf{x}}\Phi)=0
\end{equation}
with Bernoulli's law:
\begin{equation}\label{equ:2}
\p_t\Phi+\frac{1}{2}|\nabla_{\mathbf{x}}\Phi|^2+i(\r)=B_0,
\end{equation}
where $\mathbf{x}=(x_1,x_2)\in\mr^2$,
$\rho$ is the density, $\Phi$ is the velocity potential such that
$\nabla_{\mathbf{x}}\Phi=:(u,v)=\mathbf{v}$ is the flow velocity,
and $B_0$ is the Bernoulli constant determined by the incoming flow and/or boundary
conditions.
For a polytropic gas, by scaling,
$$
c^2(\r)=\r^{\g-1},\quad
i(\r)=\frac{\r^{\g-1}-1}{\g-1},\qquad \g>1,
$$
where $c(\r)$ is the sound speed.
For the isothermal case, $\g=1$,
$$
p(\rho)=\rho, \qquad  c^2(\rho)=1, \qquad  i(\rho)= ln \rho
$$
as the limiting case when $\g\to 1$.
In this paper, we focus mainly on the case $\gamma>1$, since a similar
argument works when $\gamma=1$.

\begin{figure}
	\centering
	\includegraphics[height=4.5cm]{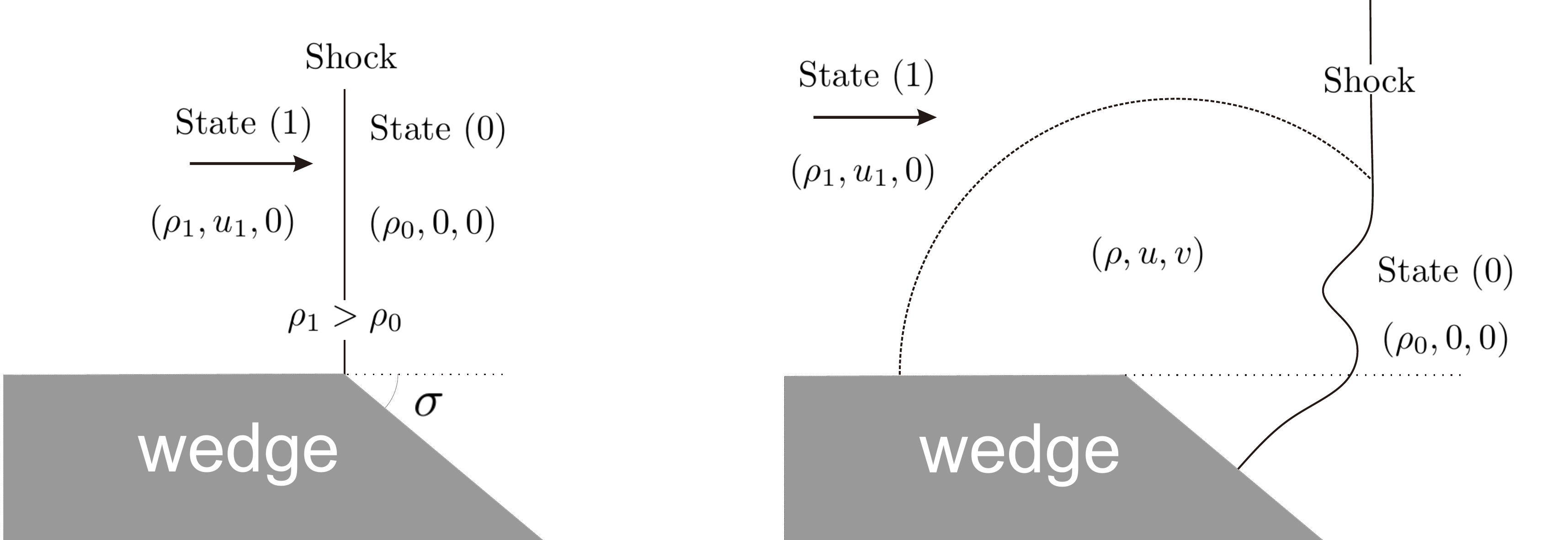}
	\caption{The Lighthill problem for shock diffraction}\label{Fig1}
\end{figure}

As shown in Fig. \ref{Fig1}, we consider two piecewise constant Riemann data
with the left state, State $(1)$:
$(\r_1,u_1,0), u_1>0$, and the right state, State $(0)$: $(\r_0,0,0)$, separated by
a vertical shock $S_0$ and above the wedge with the corner angle $\theta_w=\pi-\sigma$,
where $\sigma\in(0,\pi)$.
As this incident shock passes through the
wedge, the incident shock interacts with the sonic circle and
becomes a transonic shock.
On the other hand, physical observation and numerical analysis {\cite{Skews,ST}} indicate
that, when a shock is diffracted by a convex cornered wedge,
at least a characteristic discontinuity (vortex sheet or entropy wave) should be generated.
Mathematically, to confirm this phenomenon, one needs to show the nonexistence of regular solutions of the Lighthill problem.
Therefore,
we are interested in the question of whether any solution of the Lighthill problem,
governed by the potential flow equation,
is irregular at the wedge corner; that is,
a regular solution, which is Lipschitz at the corner,
does not exist, where the exact notion of regular solutions
will be given in \S \ref{regularsolution}.

In this paper, we prove the nonexistence of regular solutions to
this Riemann data
when the initial left state
$(\rho_1, u_1,0)$ is subsonic, {\it i.e.},
$u_1<c_1$. Then the pseudo-velocity $(U,V)=(u_1,0)$ at the origin is
pseudosubsonic,
and the degenerate boundary is the sonic circle
centered at $(u_1,0)$ with radius $c_1$.
One of the main ingredients in our analysis is to develop a weak maximum principle at the wedge corner
for the velocity in a special direction by the integration method.
It is well known that, if a regular solution is assumed to have a $C^1$-velocity at the wedge corner,
then we can directly apply Hopf's maximum principle to the directional derivative
of the potential function to reach a contradiction.
However, it does not apply directly to the solution that is not $C^2$-continuous.
Therefore, it is the key point in \S \ref{conclusion} to develop the weak maximum
principle without the $C^2$-continuity of the solution.

The mathematical study of the shock diffraction problem dates back to the 1950s by the work
of
Lighthill \cite{Lighthill,Lighthill2} via asymptotic analysis, which is now called the Lighthill problem;
also see Bargman \cite{Bargman},
Fletcher, Weimer, and Bleakney \cite{FWB}, and Fletcher, Taub, and Bleakney \cite{FTB}
via experimental analysis,
as well as
Courant, and Friedrichs \cite{CourantFriedrichs-book1999SupersonicFlowandShockWaves} and Whitham \cite{Whitham}.
To date, all efforts for rigorous mathematical analysis of the Lighthill problem
have focused
on some simplified models.
For one of these models, the nonlinear wave system,
Kim \cite{k-jde20102482906nonlinearwavesystemshockdiffraction} first studied
this problem for the right wedge angle with an additional physical assumption
that the transonic shock does not collide with the sonic circle of the right state.
More recently, in Chen, Deng, and Xiang \cite{cdx-1}, this assumption was removed,
and the existence and optimal regularity of shock diffraction
configurations were established for all angles of the convex wedge
via a different approach.

A closely related problem, shock reflection-diffraction by concave cornered wedges,
has been systematically analyzed
in Chen, and Feldman \cite{CF, CFbook}, Bae, Chen, and Feldman \cite{BCF} and Chen, Feldman, and Xiang \cite{CFXConvexity,CFXUniqueness},
where the existence and uniqueness of regular
shock reflection-diffraction configurations has been established up to
the detachment wedge angle.
For the nonsymmetric case, the non-existence of regular solutions has been
shown in Feldman, and Hu \cite{FH} when the wedge angle is sufficiently close to $\pi$.
The Prandtl-Meyer reflection for supersonic potential flow impinging onto
a solid wedge has also been analyzed first in Elling, and Liu \cite{EllingLiu2}
and, most recently, in Bae, Chen, and Feldman \cite{BCF2,BCF3} for the general case.
For other related references, we refer the reader to Serre \cite{Serre} for Chaplygin gas,
Canic, Keyfitz, and Kim \cite{ckk-SIAMjma20061947} for
the nonlinear wave system, and Zheng \cite{z-actamathsinica200622177pressuregradientsystem}
for the pressure-gradient system. See also \cite{FX,QX} for steady flow passing through a wedge.

The organization of this paper is as follows:
In \S \ref{sec:problem}, we formulate the Lighthill problem as a free boundary problem
for a nonlinear equation of mixed elliptic-hyperbolic type,
prove the important fact that the speed of incident shock is subsonic and
the incident shock hits the sonic circle in the first quadrant,
and then introduce the notion of regular solutions of the free boundary problem
and state the main theorem.
In \S \ref{conclusion},
we prove the monotonicity property of regular solutions
with respect to a special direction.
This means that, if the regular solution exists,
the speed along some direction cannot achieve the negative minimum anywhere in $\overline\Omega$.
Then we reach a contradiction in \S \ref{conclusion} from the monotonicity property,
actually via a weak version of Hopf's maximum principle.
In Appendix A, we prove that a solution of some type of linear elliptic equations,
which is only assumed to be $L^{\infty}$ at the wedge corner, is actually continuous.

\section{Mathematical formulation of the Lighthill problem}\label{sec:problem}
In this section, we first formulate the Lighthill problem as an initial-boundary
value problem ({Problem 2.1}) for the potential flow equation,
then reduce it to a boundary value
problem ({Problem 2.2}) and further to a free boundary problem ({Problem 2.3})
for a nonlinear equation of mixed elliptic-hyperbolic type,
prove that the speed of incident shock is subsonic and
the incident shock hits the sonic circle in the first quadrant,
and finally introduce the notion of regular solutions of the free boundary problem
and state the main theorem.

\subsection{The Lighthill problem}

When a plane shock in the $(t,\mathbf{x})$-coordinates,
$\mathbf{x}=(x_1,x_2)\in\mathbb{R}^2$, with left state $(\rho, u,v)=(\rho_1, u_1,0)$
and right state $(\rho_0,0,0)$ satisfying $u_1>0$ and $\rho_0<\rho_1$, passes a wedge
$$
W:=\{(x_1,x_2)\, :\, x_2<0,\ x_1<x_2\cot\theta_{\rm w}\}
$$
stepping down, the shock diffraction phenomenon occurs.
Mathematically, this problem can be formulated as the
following initial-boundary value problem.

\vspace{-10pt}
\begin{figure}[!h]
  \centering
  \qquad \includegraphics[width=0.30\textwidth]{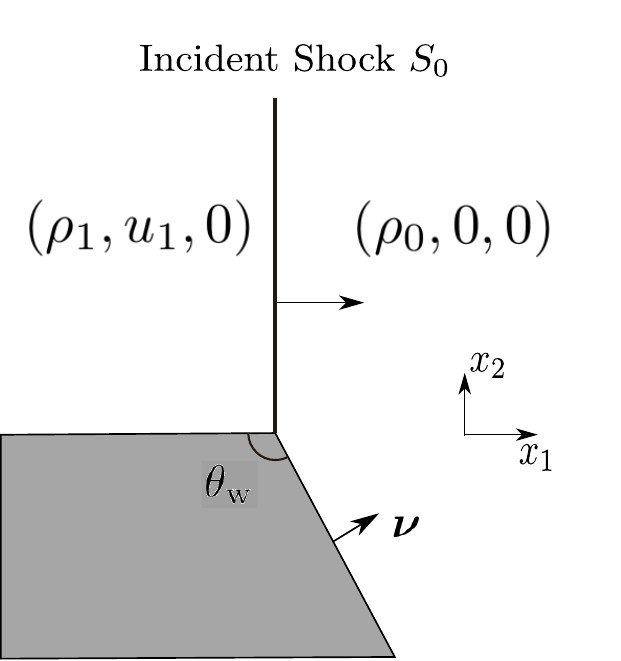}\qquad\qquad\qquad
  \includegraphics[width=0.30\textwidth]{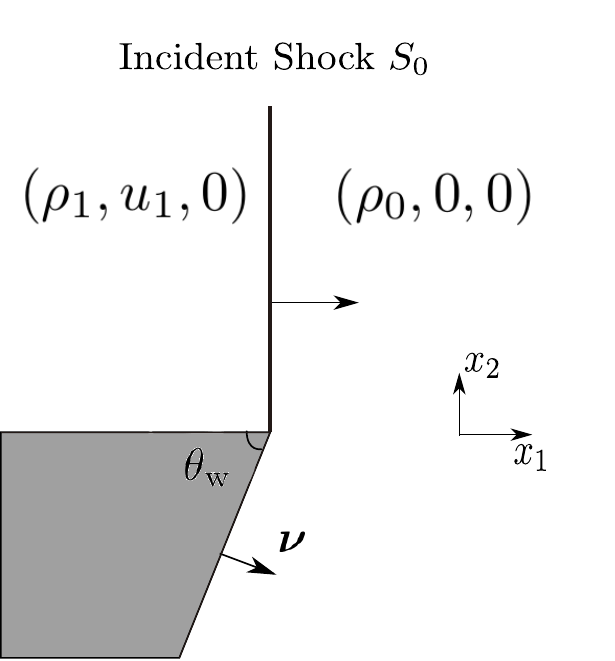}
  \caption{Initial-boundary value problem}
\label{fig:initial}
\end{figure}

{\bf Problem 2.1 }(initial-boundary value problem) (see Fig. \ref{fig:initial}).
Seek a solution of system \eqref{equ:1}--\eqref{equ:2}
with the initial condition at $t=0$,
\begin{equation}\label{con:1.1}
 (\rho,\Phi)|_{t=0}=
 \begin{cases}
 (\rho_1,u_1x_1)\qquad\,\, &\mbox{in}\,\, \{x_1<0, x_2>0\},\\[1mm]
(\rho_0,0) \qquad\,\, &  \mbox{in}\,\,\mathbb{R}^2 \setminus  (\overline{W} \cup \{x_1 \le 0, x_2\ge 0\} ),
\end{cases}
 \end{equation}
and the slip boundary condition along the wedge boundary $\partial W$:
 \begin{equation}\label{con:1.2}
 \nabla\Phi\cdot\nnu=0,
 \end{equation}
where $\nnu$ is the exterior unit normal to $\partial W$.

\medskip
The initial-boundary value problem
\eqref{equ:1}--\eqref{equ:2} and \eqref{con:1.1}--\eqref{con:1.2} is
invariant under the self-similar scaling
$$
(t, \mathbf{x})\rightarrow(\alpha t, \alpha \mathbf{x}),\quad
(\rho,\Phi)\rightarrow(\rho, \frac{\Phi}{\alpha})\qquad\,\,\,
\mbox{for}\,\, \alpha\neq 0.
$$
Thus, we seek self-similar solutions with the form
\begin{equation}\label{1.2a}
\rho(t,\mathbf{x})=\rho(\xi,\eta),\quad
\Phi(t,\mathbf{x})=t\phi(\xi,\eta)
\qquad\,\,\,\mbox{for}\,\,\, (\xi,\eta)=\frac{\mathbf{x}}{t},
\end{equation}
where
$\pSi:=\phi-\frac{\xi^2+\eta^2}{2}$ is called a pseudovelocity potential
with $(\pSi_{\xi},\pSi_{\eta})=(u-\xi,v-\eta)=(U,V)$ as the corresponding
pseudovelocity.
In the self-similar plane, the domain outside the wedge is
\begin{equation}\label{2.8a}
\Lambda:=\mathbb{R}^2 \setminus  \overline{W}.
\end{equation}

Then the pseudovelocity potential function
$\pSi$ satisfies the following equation for self-similar
solutions:
\begin{equation}
\mbox{div}(\rho D\pSi)+2\rho=0
\label{1.1}
\end{equation}
with
\begin{equation}
\frac{1}{2}|D\pSi|^2+\pSi+\frac{c^2}{\gamma-1}=\frac{c^2_0}{\gamma-1}, \label{1.2}
\end{equation}
where the divergence ${\rm div}$ and gradient $D$ are with respect to the
self-similar variables $(\xi,\eta)$ and $c_0=c(\rho_0)$.
Therefore, the potential function
$\pSi$ is governed by the following second-order potential flow equation:
\begin{equation}\label{equ:potential flow equation}
\div(\r(D\pSi,\pSi)D\pSi)+2\r(D\pSi,\pSi)=0
\end{equation}
with
\begin{equation}\label{equ:bernoulli law for density}
\r(D\pSi,\pSi)=\left(c_0^2-(\g-1)\left(\pSi+\frac{1}{2}|D\pSi|^2\right)\right)^{\frac{1}{\gamma-1}}.
\end{equation}
Equation \eqref{equ:potential flow equation} is of
mixed hyperbolic-elliptic type: It is elliptic if
and only if
$
|D\pSi|<c(D\pSi,\pSi):=\rho(D\pSi, \pSi)^{\frac{\gamma-1}{2}},
$
which is equivalent to
\begin{equation}\label{criterion:ellipticity}
|D\pSi|<c_{\star}(\pSi,\g)\deq\sqrt{\frac{2}{\gamma+1}\big(c_0^2-(\gamma-1)\pSi\big)}.
\end{equation}

Since the problem involves shock waves,
the solutions of
\eqref{equ:potential flow equation}--\eqref{equ:bernoulli law for density}
have to be understood as weak solutions
in distributional sense. Based on the argument in \S 2.1 in \cite{CF},
for a piecewise $C^1$ solution $\pSi$ separated by a shock $S$, if $\varphi\in W_{\rm loc}^{1,\infty}(\Lambda)$, it is easy to
verify that $\pSi$ satisfies \eqref{equ:potential flow equation}--\eqref{equ:bernoulli law for density} in the distributional sense
if and only if it is a classic solution of
\eqref{equ:potential flow equation}--\eqref{equ:bernoulli law for density}
in each smooth subregion and satisfies the following
Rankine--Hugoniot conditions across $S$:
\begin{eqnarray}
&&[\r(D\pSi,\pSi)D\pSi\cdot\nnu]_{S}=0,\label{RH conditon involve normal direction}\\
&&[\pSi]_{S}=0,\label{RH conditon continous}
\end{eqnarray}
where $[w]_S$ denote the difference between the right and left traces of quantity $w$ along $S$.

In fact, a discontinuity of $D\pSi$ satisfying the Rankine--Hugoniot
conditions
\eqref{RH conditon involve normal direction}--\eqref{RH conditon continous} is called a shock if it satisfies the following
physical entropy condition: {\it The density function $\r$ increases across a shock	in the pseudoflow direction}.

\subsection{Location of the incident shock and the boundary value problem in the self-similar coordinates}
\label{subsec:incident shock}\label{subsec:sonic angle}
Consider the left state (1):
$(\r,u,v)=(\r_1,u_1,0)$ with $\r_1>0$ and $u_1>0$, and
the right state (0): $(\r,u,v)=(\r_0,0,0)$ with $\r_1>\r_0>0$
such that the entropy condition and the
Rankine--Hugoniot conditions are satisfied on the incident shock separating these two states.

We note that, from \eqref{1.2}, the potentials of states (1) and (0) are
\begin{equation}\label{2.11a}
\pSi_1=-\frac{\xi^2+\eta^2}{2}+u_1\xi +\frac{c_0^2-c_1^2}{\gamma-1}-\frac 12 u_1^2,\qquad
\pSi_0=-\frac{\xi^2+\eta^2}{2}.
\end{equation}

From \eqref{RH conditon continous},
$[D\pSi]_{S}$ is the normal direction to the shock.
Then we have the following two equations as the Rankine-Hugoniot
conditions along the flat shock $L$ separating state (0) from a constant state $(u, v, \rho)$ with potential $\varphi$:
\begin{equation}
\label{RH condition for mass}
u\big(\r(u-\xi)+\r_0\xi\big)+v\big(\r(v-\eta)+\r_0\eta\big)=0
\end{equation}
and
\begin{equation}
\label{RH condition for position}
\pSi=\pSi_0.
\end{equation}

Thus, on the incident shock separating states (1) and (2),
equations (\ref{RH condition for mass})--(\ref{RH condition for position}) hold with $(\r,u,v)=(\r_1,u_1,0)$ and $\varphi=\varphi_1$.
From (\ref{RH condition for position}), we find that the incident shock
is line $\{\xi=\xi_1\}$  for $\xi_1=\frac 12 u_1-\frac{c_0^2-c_1^2}{u_1(\gamma-1)}$.
From \eqref{RH condition for mass} with
$(\r,u,v,\xi)=(\r_1,u_1,0,\xi_1)$, we have
\begin{equation}\label{formula: incident shock speed}
0<u_1=\frac{\r_1-\r_0}{\r_1}\xi_1<\xi_1.
\end{equation}
Then \eqref{RH condition for mass}--\eqref{RH condition for position} imply
\begin{equation}\label{formula: position of incident shock}
\xi_1=\sqrt{\frac{2\r^2_1(c_1^2-c_0^2)}{(\g-1)(\r_1^2-\r_0^2)}}>0.
\end{equation}

We now show that the incident shock interacts with the
sonic circle of the left state $(1)$ through the following
relation:
\begin{equation}\label{relation: shock hit sonic circle}
0<\xi_1-u_1<c_1.
\end{equation}
In fact, the positivity follows directly
from \eqref{formula: incident shock speed}.
From \eqref{formula: incident shock speed}--\eqref{formula: position of incident shock},
we have
$$
c^2_1-(\xi_1-u_1)^2=c_1^2-\frac{2\r_0^2(c_1^2-c_0^2)}{(\g-1)(\r_1^2-\r_0^2)}
=\frac{(\g-1)c_1^2\r_1^2-(\g+1)\r_0^2c_1^2+2\r_0^2c_0^2}{(\g-1)(\r_1^2-\r_0^2)}.
$$
Since $c^2=\r^{\g-1}$, it suffices for the second inequality in
\eqref{relation: shock hit sonic circle}
to prove that
$$
f(s):=2s^{\g+1}-(\g+1)s^2+(\g-1)>0
\qquad\,\,\, \mbox{for $s=\frac{\rho_0}{\rho_1}\in (0,1)$}.
$$
This can be seen by the fact that
$f'(s)<0$ and $f(1)=0$.
Then we have the following lemma.

\begin{lemma}\label{prop2.1 incident shock}
Consider the left state $(\rho_1,u_1,0)$ and right state $(\rho_0,0,0)$
with $\rho_1>\rho_0>0$ and $u_1>0$.
Then the location of the incident shock $\xi_1$ satisfies
\eqref{relation: shock hit sonic circle},
and angle $\theta_1$ determined by $\tan \theta_1=\frac{\eta_1}{\xi_1-u_1}$
for $P_1=(\xi_1, \eta_1)$ must
be in the interval $(0, \frac{\pi}{2})$.
\end{lemma}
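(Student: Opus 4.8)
The plan is to establish the two assertions in turn; the first is a consolidation of the identities derived immediately above the statement, and the second is then a short geometric corollary of the \emph{strict} version of \eqref{relation: shock hit sonic circle}.

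For the first assertion I would simply reassemble the preceding computation. The positivity $0 < \xi_1 - u_1$ is immediate from \eqref{formula: incident shock speed}, which yields $u_1 = \frac{\rho_1 - \rho_0}{\rho_1}\xi_1 < \xi_1$. For the upper bound $\xi_1 - u_1 < c_1$, I would insert the explicit value \eqref{formula: position of incident shock} of $\xi_1$ together with $u_1 = \frac{\rho_1 - \rho_0}{\rho_1}\xi_1$ into $c_1^2 - (\xi_1 - u_1)^2$; using $c^2 = \rho^{\gamma-1}$, the numerator of the resulting fraction equals $\rho_1^{\gamma+1} f(s)$ with $s = \rho_0/\rho_1 \in (0,1)$, while the denominator $(\gamma-1)(\rho_1^2 - \rho_0^2)$ is positive. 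Hence the sign of $c_1^2 - (\xi_1 - u_1)^2$ is the sign of $f(s)$, and $f > 0$ on $(0,1)$ follows from $f(1) = 0$ together with $f'(s) < 0$ there, giving the claimed strict inequality.

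For the second assertion I would use that $P_1 = (\xi_1, \eta_1)$ lies on the sonic circle of state $(1)$, centered at $(u_1, 0)$ with radius $c_1$, so that $(\xi_1 - u_1)^2 + \eta_1^2 = c_1^2$ and therefore $\eta_1^2 = c_1^2 - (\xi_1 - u_1)^2$. The \emph{strict} upper bound from the first assertion forces $\eta_1^2 > 0$, so the incident shock $\xi = \xi_1$ meets the sonic circle transversally at a point off the $\xi$-axis; selecting the upper branch $\eta_1 > 0$ relevant to the configuration of Fig.~\ref{Fig1}, both $\xi_1 - u_1 > 0$ and $\eta_1 > 0$. Since $\tan\theta_1 = \frac{\eta_1}{\xi_1 - u_1}$ is then a positive finite number, $\theta_1$ lies in the open first quadrant $(0, \frac{\pi}{2})$, as asserted.

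The only substantive step is the reduction to the sign of $f$ in the first assertion; everything else is elementary. The point I would emphasize is that the second assertion depends essentially on the strictness of $\xi_1 - u_1 < c_1$: equality would give $\eta_1 = 0$ and a tangential contact with $\theta_1 = 0$, so it is precisely the monotonicity argument for $f$ (excluding $f(s) = 0$ on $(0,1)$) that guarantees $\theta_1 > 0$. A minor point to verify at the outset is that the Rankine-Hugoniot conditions \eqref{RH condition for mass}--\eqref{RH condition for position} indeed locate the incident shock on the vertical line $\xi = \xi_1$, so that $\theta_1$ is exactly the polar angle of $P_1$ about the sonic center $(u_1, 0)$.
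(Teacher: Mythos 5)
Your proposal is correct and follows essentially the same route as the paper: the bound $0<\xi_1-u_1<c_1$ is obtained exactly as in the text preceding the lemma, by reducing $c_1^2-(\xi_1-u_1)^2>0$ to the positivity of $f(s)=2s^{\gamma+1}-(\gamma+1)s^2+(\gamma-1)$ on $(0,1)$ via $f(1)=0$ and $f'<0$. Your second paragraph merely makes explicit the geometric step (intersection of the vertical shock with the sonic circle, $\eta_1^2=c_1^2-(\xi_1-u_1)^2>0$) that the paper leaves implicit, so there is nothing to add.
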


Therefore, the incident shock interacts with the pseudosonic circle of state $(1)$ to
become a transonic shock.
Moreover, initial-boundary value problem \eqref{equ:1}--\eqref{equ:2}
and \eqref{con:1.1}--\eqref{con:1.2}
in the $(t,\mathbf{x})$-coordinates can be formulated as the following
boundary value problem in the self-similar coordinates $(\xi,\eta)$.

\smallskip
{\bf Problem 2.2} (boundary value problem).
Seek a regular solution $\pSi$ (see Definition \ref{def:regular solution} below) of \eqref{equ:potential flow equation}
in the self-similar
domain $\L$ with the
slip boundary condition on the wedge boundary $\p\L$,
\begin{equation}\label{equ:13}
D\pSi\cdot\nnu|_{\p\L}=0,
\end{equation}
and the asymptotic boundary condition at infinity:
$$
(\r,\pSi)\ra(\bar{\r},\bar{\pSi})
=\bc
(\r_1, \pSi_1)\qquad&\text{in }\{\xi<\xi_1,\
\eta\geq0\},\\[1mm]
(\r_0, \pSi_0)\qquad&\text{in }\{\xi>\xi_1,\
\eta\geq0\}\cup(\{\eta<0\}\cap \Lambda),\ec
$$
when $\xi^2+\eta^2\ra\infty$ in the sense that
$$
\lim_{R\ra\infty}\|\pSi-\bar{\pSi}\|_{C^1(\L\backslash
	B_{R}(0))}=0.
$$

\medskip
The solution of {Problem 2.2} can be shown to be
the solution of {Problem 2.1}.
Therefore, if the nonexistence of the regular solution
of {Problem 2.2} can be shown,
then the nonexistence of
the regular self-similar solution of {Problem 2.1}
follows.

\subsection{Regular solutions}\label{regularsolution}
Since $\overline{\pSi}$ does not satisfy the slip boundary condition
for $\xi\geq0$, the solution must differ from state $(1)$ in
$\{\xi<\xi_1\}\cap\L$ near the wedge corner,
which forces the shock to be
diffracted by the wedge.
Inspired by the ``no diffraction" solution when the wedge is flat $(\sigma=0)$ and based on Lemma \ref{prop2.1 incident shock},
in an open domain $\O$ bounded by shock $\Gamma_{\rm shock}$,
the pseudosonic circle $\Gamma_{\rm sonic}$ of the left state $(1)$
with center $(u_1,0)$ and radius $c_1>0$,
and the cornered wedge $\Gamma_{\rm wedge}$,
the regular solution is expected to be pseudosubsonic and smooth,
to satisfy the slip boundary
condition along the wedge, and to be $C^{1,1}$-continuous across
the pseudosonic circle to become pseudosupersonic.

Let $\mathcal{C}$ be the corner of the wedge. Let $\phi:=\pSi+\frac{1}{2}(\xi^2+\eta^2)$.
Note that $\phi_0\equiv 0$.
To show the nonexistence of regular solutions of {Problem 2.2},
we will show the nonexistence of regular solutions
of the following free boundary problem.

\medskip
{\bf Problem 2.3} (free boundary value problem) (see Fig. \ref{fig:freeboundary}).
Seek a function $\phi\in{\rm Lip}({\overline\Omega})\cap C^1(\overline \Omega\backslash\mathcal{C})
\cap C^3(\overline\Omega\backslash(\overline{\gammashock}\cup
\overline{\Gamma_{\text{\rm sonic}}}\cup\mathcal{C}))$ that satisfies
\begin{equation}
\left(c^2-(\phi_\xi-\xi)^2\right)\phi_{\xi\xi}-2(\phi_\xi-\xi)(\phi_\eta-\eta)\phi_{\xi\eta}
+\left(c^2-(\phi_\eta-\eta)^2\right)\phi_{\eta\eta}=0 \quad  \text{ in $\Omega$},
\label{quasilinearequationofphi}
\end{equation}
with the following boundary conditions.
\begin{enumerate}
\item[\rm (i)] {\it Boundary conditions on the shock}:
\begin{align}\label{bou:RHregular}
&\phi=0\qquad\text{ on  } \Gamma_{\text{\rm shock}};\\
\label{bou1:RHregular}
&\rho(D\varphi, \varphi)D\varphi\cdot\nnu=\rho_0 D\varphi_0\cdot\nnu\qquad\text{ on  } \Gamma_{\text{\rm shock}},
\end{align}
where $\varphi:=\pSi-\frac{1}{2}(\xi^2+\eta^2)$.
\item[\rm (ii)] {\it The sonic circle is a weak discontinuity}:
\[
\phi=\phi_1, \quad D\phi=D\phi_{1}=(u_1,0)\qquad\,\,  \text{on $\overline{\Gamma_{\text{\rm sonic}}}$}.
\]
\item[\rm (iii)] {\it Slip boundary condition along the wedge}:
\begin{equation}\label{equ:40}
	D\phi\cdot \nnu=0   \qquad\,\,    \text{ on $\Gamma_{\text{\rm wedge}}\backslash\MC$}.
	\end{equation}
\end{enumerate}
Moreover, the function $\phi$ is subsonic in $\Omega\cup\Gw^0$, \emph{i.e.},
\begin{equation}
c^2-|D\varphi|^2>0 \qquad \text{ in $\Omega\cup\Gw^0$},
\label{subsonicityinOmega}
\end{equation}
where $\Gw^0$ is the relative interior of $\Gw$.

\begin{figure}
\centering
\includegraphics[width=6cm]{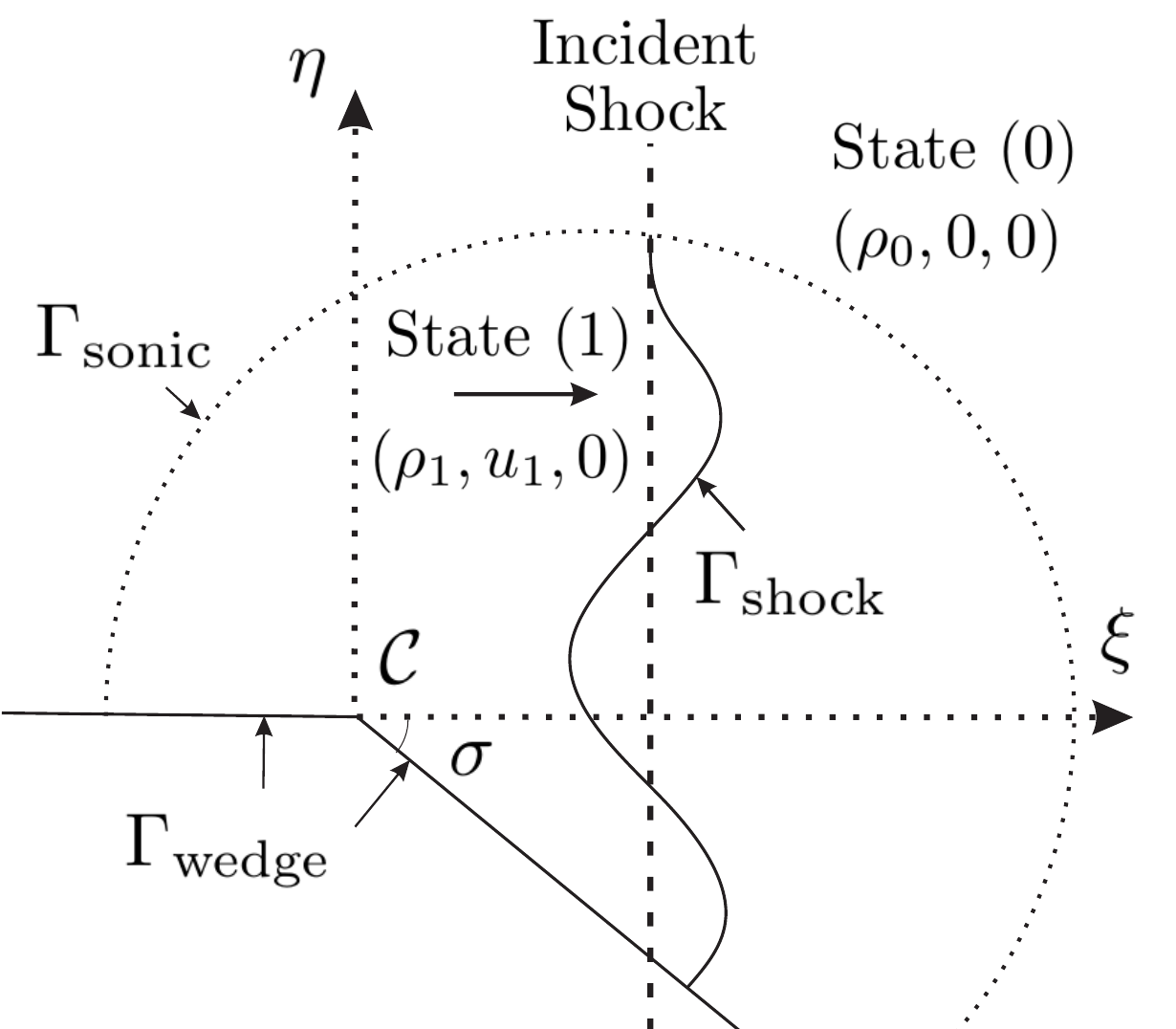}
\caption{Free boundary value problem}\label{fig:freeboundary}
\end{figure}

\medskip
Since the sonic circle is a weak discontinuity, any solution of {Problem 2.3} is a solution of {Problem 2.2},
provided that
$$
\pSi
=\bc
\pSi_1\qquad&\text{in }\{\xi<\xi_1,\
(\xi-u_1)^2+\eta^2\geq c_1^2\},\\[1mm]
\pSi_0\qquad&\text{in }\Lambda\backslash(\{\xi<\xi_1,\
(\xi-u_1)^2+\eta^2\geq c_1^2\}\cup\Omega).\ec
$$

Now we define the notion of regular solutions of the Lighthill problem. We use the notation
$\sigma=\pi-\theta_{\rm w}$.

\begin{definition}\label{def:regular solution}
A function $\phi$ is called a regular solution of the Lighthill problem if $\phi$ is a solution
of {Problem 2.3}
with the following additional condition{\rm :}
There exists $\tsigma\in (\sigma, \pi)$ such that vector $\vg:=(\sin \tsigma, \cos\tsigma)$
is not tangential to $\Gamma_{\text{\rm shock}}$ at any point on $\Gamma_{\text{\rm shock}}$.
\end{definition}

\begin{remark}\label{rem-2.1}
	Note that, by the $C^1$-regularity of $\phi$ in $\overline\Omega\backslash\mathcal{C}$,
	\eqref{bou:RHregular}, and the condition of Definition {\rm \ref{def:regular solution}},
	it follows that $\Gamma_{\text{\rm shock}}$ is a $C^1$-curve up to its ends. Also, in the proof of the main result of this paper, Theorem \ref{thm}, the boundary condition (\ref{bou1:RHregular}) is not used, i.e. we use only (\ref{bou:RHregular}) on $\Gamma_{\text{\rm shock}}$.
\end{remark}

For the condition of Definition \ref{def:regular solution}, we have the following remarks.

\begin{remark}
	When the shock is convex as observed in the experimental results,
	the additional condition in Definition {\rm \ref{def:regular solution}}
	follows directly.
\end{remark}

\begin{remark}\label{rem:2.3}
When the wedge is flat {\rm (}$\sigma=0${\rm )}, there is ``no diffraction" solution,
{\it i.e.}, $\pSi$ consists of only two uniform
states $\pSi_0$ and $\pSi_1$ separated by the incident shock.
Then, when the wedge is almost flat {\rm (}{\it i.e.}, $\sigma$ is very small{\rm )},
one expects that $\pSi$ should be close to $\pSi_1$ in the sense of $C^1(\Omega)$.
In this case, from the Dirichlet condition
\eqref{bou:RHregular},
the curved shock $\Gsh$ is a $C^1$-perturbation of the incident shock.
This implies the condition of Definition {\rm \ref{def:regular solution}}.
\end{remark}

\begin{remark} If \Gammashock is a graph of a $C^1$-function of $\eta$ and the
wedge angle is obtuse {\rm (}{\it i.e.}, $\sigma<\frac{\pi}{2}${\rm )},
then the additional condition in Definition {\rm \ref{def:regular solution}}
holds, since we can choose
$\tsigma=\frac{\pi}{2}$.
\end{remark}

Now we can present our main theorem.

\begin{theorem}[{\bf main theorem}]\label{thm}
The Lighthill problem does not permit a regular solution
in the sense of Definition {\rm \ref{def:regular solution}}.
\end{theorem}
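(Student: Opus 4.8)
The plan is to argue by contradiction: assume a regular solution $\phi$ of {\bf Problem 2.3} exists, and study the directional derivative of $\phi$ along the fixed direction $\vg=(\sin\tsigma,\cos\tsigma)$ supplied by Definition \ref{def:regular solution}. Set $w:=\partial_{\vg}\phi=D\phi\cdot\vg$, the component of the velocity in the direction $\vg$. Two elementary observations orient the whole argument. First, on $\gammasonic$ we have $D\phi=(u_1,0)$, so $w\equiv u_1\sin\tsigma$ there, and since $\tsigma\in(\sigma,\pi)\subset(0,\pi)$ this is a strictly positive constant. Second, because $\phi_0$ is a constant, the Dirichlet condition \eqref{bou:RHregular} forces $D\phi$ to be normal to $\gammashock$, whence $w=(\vg\cdot\nnu)\,\partial_{\nnu}\phi$ on $\gammashock$; the non-tangency clause of Definition \ref{def:regular solution} guarantees that $\vg\cdot\nnu$ keeps a fixed, nonzero sign along $\gammashock$, which is exactly the structure needed to control $w$ on the shock.

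Next I would derive the linear equation for $w$. Since $\phi\in C^{3}$ away from $\overline{\gammashock}\cup\overline{\gammasonic}\cup\MC$ and is subsonic in $\Omega$ by \eqref{subsonicityinOmega}, differentiating the quasilinear equation \eqref{quasilinearequationofphi} along the constant direction $\vg$ yields a second-order linear equation for $w$ whose principal part coincides with that of \eqref{quasilinearequationofphi}, hence is elliptic in $\Omega$ (degenerating only at $\gammasonic$). The aim (the ``monotonicity property'' of \S\ref{conclusion}) is to show that $w$ cannot attain a negative minimum over $\overline\Omega$ at any point other than the corner $\MC$. On the relative interior of $\gammasonic$ this is immediate from $w\equiv u_1\sin\tsigma>0$; on the relative interiors of $\gammashock$ and $\gammawedge$ I would exclude a negative minimum by combining the boundary relations above --- normality of $D\phi$ on $\gammashock$, and the slip condition $D\phi\cdot\nnu=0$ on $\gammawedge$ (which upon tangential differentiation constrains the second derivatives of $\phi$) --- with a Hopf-type boundary argument, the decisive sign information on the shock coming from the non-tangency of $\vg$.

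The delicate point is that the classical pointwise maximum principle is unavailable, for two reasons: the linear equation for $w$ carries a zeroth-order term of a priori indefinite sign, and, more seriously, $\phi$ is only Lipschitz (not $C^{2}$, not even $C^{1}$) at $\MC$, so $w$ is merely $L^{\infty}$ near the corner. I would therefore replace the pointwise principle by a weak one obtained through the integration method: testing the divergence form of the differentiated equation against the negative part $(w-m)^{-}$, with $m$ the candidate minimum value, over subdomains exhausting $\Omega\setminus\MC$, and controlling the corner contributions. To make this rigorous up to $\MC$ I would invoke the Appendix, which upgrades an $L^{\infty}$ solution of the relevant linear elliptic equation to a \emph{continuous} one at the corner, so that $w$ extends continuously to $\overline\Omega$ and $w(\MC)$ is well defined.

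It remains to force the contradiction at $\MC$. The wedge corner is reentrant for the flow region (the opening angle of $\Lambda$, hence of $\Omega$, at the origin equals $\pi+\sigma>\pi$), and the slip condition holds on both faces meeting at $\MC$. Heuristically, if $\phi$ had a $C^{1}$ velocity at $\MC$, then $D\phi(\MC)$ would be orthogonal to the two distinct face normals, forcing $D\phi(\MC)=0$ and hence $w(\MC)=0<u_1\sin\tsigma$; the minimum of $w$ would then sit at $\MC$, and the boundary-point lemma applied to the directional-derivative equation would contradict the boundary relation carried by $w$ along the faces. The essential difficulty is that, at a reentrant corner, the interior-ball condition fails, so the classical Hopf lemma does not apply, and one has only $L^{\infty}$/Lipschitz regularity there; the real work is to run this argument in its \emph{weak} form, using the integral identities of the previous step together with the continuity from the Appendix. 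This corner analysis --- reconciling the reentrant geometry and the two slip conditions with the weak maximum principle --- is the main obstacle, and is where the bulk of \S\ref{conclusion} must be spent.
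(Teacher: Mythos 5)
You have correctly reconstructed the scaffolding of the paper's argument: the same quantity $w=\phi_{\vg}$, its strict positivity $u_1\sin\tsigma$ on $\gammasonic$, the elliptic equation obtained by differentiating along $\vg$, continuity of $D\phi$ at $\MC$ via the Appendix lemma, the conclusion $D\phi(\MC)=0$ from the two slip conditions, and the recognition that Hopf's lemma is unavailable at the reentrant corner and must be replaced by an integral argument. But the decisive step is missing, and you defer it explicitly (``the real work is to run this argument in its weak form\dots the main obstacle\dots where the bulk of \S\ref{conclusion} must be spent''). What actually closes the proof is a specific identity, not a generic weak maximum principle. After rotating so that $\vg=-\partial_\xi$, one has $u:=\phi_\xi\le-\delta$ on $\partial B_r(\MC)\cap\Omega$; setting $h=(u+\delta)^+$ and $E=(B_r(\MC)\setminus B_\epsilon(\MC))\cap\Omega$, the nonnegative quantity
\begin{equation*}
I=\int_{\{u>-\delta\}\cap E}\Big(\tfrac{a_{11}}{a_{22}}u_\xi^2+\tfrac{2a_{12}}{a_{22}}u_\xi u_\eta+u_\eta^2\Big)\,d\xi\, d\eta
\end{equation*}
collapses to pure boundary terms $-\int_{\partial E}v_{\T}h\,ds$ (with $v=\phi_\eta$), because the original PDE gives $\tfrac{a_{11}}{a_{22}}u_\xi+\tfrac{2a_{12}}{a_{22}}u_\eta=-v_\eta$ while $u_\eta=v_\xi$, so the integrand is an exact divergence. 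The slip condition on the two faces then turns the wedge terms into tangential derivatives of $h^2/2$, whose endpoint values produce, as $\epsilon\to0$, the limit $-\frac{\delta^2\sin(\pi-\sigma)}{2\sin\sigma_1\sin\sigma_2}<0$, the sign being fixed precisely because $\sigma_1+\sigma_2=\pi-\sigma\in(0,\pi)$, i.e.\ by the convexity of the wedge. This contradiction with $I\ge0$ is the theorem. Your proposed scheme --- ``testing the divergence form of the differentiated equation against $(w-m)^-$'' --- cannot substitute for it: the equation for $w$ is in non-divergence form with first-order coefficients blowing up like $1/r$ at $\MC$ (so no integration by parts on it is controlled), and nothing in a generic level-set test produces a strictly negative corner contribution; the paper's trick is to bypass the $w$-equation entirely and use the PDE for $\phi$ as a first-order, stream-function-type relation between $Du$ and $Dv$.

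A second concrete gap: your proposal contains no estimate of $D^2\phi$ near the corner, and without one the argument fails twice. The paper's Lemma~\ref{lem:interiorestimate} (a scaling/Nirenberg/Schauder argument) gives $|D^2\phi|\le C\,\omega(2r)/r$ on $\partial B_r(\MC)\cap\Omega$ whenever $\|D\phi\|_{C(B_r(\MC)\cap\Omega)}\le\omega(r)$. This is needed, first, to verify the hypothesis $|b_i|\le C_E/r$ of the Appendix Lemma~\ref{lem:continuitylemma} for the functions $\phi_\eta$ and $\sin\sigma\,\phi_\xi+\cos\sigma\,\phi_\eta$ --- you are not entitled to invoke that lemma, hence not entitled to the continuity and vanishing of $D\phi$ at $\MC$, without it. Second, once $D\phi(\MC)=0$ gives $\|D\phi\|_{C(B_r\cap\Omega)}\le f(r)\to0$, the same lemma upgrades to $|D^2\phi|\le C f(2\epsilon)/\epsilon$ (Lemma~\ref{lem:interiorsecondorderderivativeestimateforpotentialfunction}), which is exactly what makes the inner boundary term $\int_{\partial B_\epsilon\cap\Omega}v_\T h\,ds\le 2\pi\epsilon\cdot Cf(2\epsilon)/\epsilon\to0$ vanish; with only the crude bound $C/\epsilon$ this term is merely bounded and the contradiction does not close. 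Finally, two smaller inaccuracies: the differentiated equation for $w$ has no zeroth-order term at all (substituting $\phi_{\xi\xi},\phi_{\xi\eta}$ back through the original equation, as in Lemma~\ref{lem:maximum}, leaves only first-order terms --- the obstruction is their $1/r$ blow-up, not an indefinite zeroth-order coefficient); and on $\gammashock$, writing $w=(\vg\cdot\nnu)\,\phi_{\nnu}$ with $\vg\cdot\nnu$ of fixed sign is not yet a sign for $w$ --- one still needs the sign of $\phi_{\nnu}$, which the paper obtains by applying the maximum principle and Hopf's lemma to $\phi$ itself, using that $\phi_0$ is constant so that $\phi\le\phi_0$ in $\Omega$ with equality on the shock.
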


With the estimates and properties developed in \S  \ref{conclusion}
and Appendix A,
the main theorem (Theorem \ref{thm}) will be proved in \S \ref{conclusion}.

Based on Remark \ref{rem:2.3}, one can obtain the following corollary from Theorem \ref{thm}.
\begin{corollary}
The ``no diffraction" solution is not stable in the sense of $C^1(\Omega)$ with respect to the wedge angle as $\sigma\rightarrow0$.
\end{corollary}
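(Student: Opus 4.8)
The plan is to obtain the corollary as a purely logical consequence of the non-existence Theorem \ref{thm} and the perturbative observation recorded in Remark \ref{rem:2.3}, arguing by contradiction. First I would fix the meaning of stability: to say that the ``no diffraction'' solution is stable in $C^1(\Omega)$ as $\sigma\to 0$ is to assert that there exists $\sigma_0>0$ such that, for each wedge angle $\sigma\in(0,\sigma_0)$, {\bf Problem 2.3} admits a solution $\pSi_\sigma$ on the associated domain $\Omega_\sigma$, and that $\|\pSi_\sigma-\pSi_1\|_{C^1(\Omega_\sigma)}\to 0$ as $\sigma\to 0$, where $\pSi_1$ is the State $(1)$ potential to which the ``no diffraction'' solution reduces on $\Omega_\sigma$. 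Under this hypothesis I would choose $\sigma\in(0,\sigma_0)$ small enough that $\pSi_\sigma$ lies in a prescribed $C^1$-neighborhood of $\pSi_1$.

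Next I would invoke Remark \ref{rem:2.3} essentially verbatim: once $\pSi_\sigma$ is $C^1$-close to $\pSi_1$, the Dirichlet condition \eqref{bou:RHregular} on $\gammashock$ forces $\gammashock$ to be a $C^1$-perturbation of the straight incident shock $\{\xi=\xi_1\}$. Since the tangent to $\{\xi=\xi_1\}$ is the fixed $\eta$-direction, a $C^1$-small perturbation keeps all tangent directions of $\gammashock$ within a small cone about that direction, so a vector $\vg=(\sin\tsigma,\cos\tsigma)$ with $\tsigma\in(\sigma,\pi)$ can be chosen transversal to $\gammashock$ at every point. Thus the transversality requirement of Definition \ref{def:regular solution} holds, and $\pSi_\sigma$ is a \emph{regular} solution for the wedge angle $\sigma\in(0,\pi)$. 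This contradicts Theorem \ref{thm}, which rules out any regular solution for all $\sigma\in(0,\pi)$. The contradiction shows that no such $\sigma_0$ exists, i.e. the ``no diffraction'' solution is not stable in $C^1(\Omega)$ as $\sigma\to 0$.

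The only point requiring care -- and hence the main (if modest) obstacle -- is the precise formulation of stability together with the transfer of the $C^1$-closeness to the geometry of the free boundary $\gammashock$, since the domain $\Omega_\sigma$ itself varies with $\sigma$ and degenerates as $\sigma\to 0$. Comparing $\pSi_\sigma$ with $\pSi_1$ on the common domain $\Omega_\sigma$ (rather than on a fixed reference region) removes the need for any domain-matching, and the implication ``$C^1$-closeness $\Rightarrow$ transversality of the shock'' is then immediate from the continuity of the tangent field under $C^1$-perturbation. No further analysis of the degenerate equation \eqref{quasilinearequationofphi} is needed, because the corollary reduces entirely to Theorem \ref{thm} once the stability hypothesis is shown to produce a regular solution.
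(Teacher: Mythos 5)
Your proposal is correct and is essentially the paper's own argument: the paper derives this corollary exactly by combining Remark \ref{rem:2.3} (a solution $C^1$-close to $\pSi_1$ has a shock that is a $C^1$-perturbation of the incident shock, hence satisfies the transversality condition of Definition \ref{def:regular solution} and is a regular solution) with the non-existence result of Theorem \ref{thm}, read contrapositively. Your added care in formulating the stability hypothesis on the varying domains $\Omega_\sigma$ is a reasonable fleshing-out of what the paper leaves implicit, not a different route.
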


\section{Proof of the main theorem --- Theorem \ref{thm}}\label{conclusion}

In order to prove  the main theorem,  we need to derive several estimates on the derivatives of regular
solutions in the sense of Definition \ref{def:regular solution},
based on Lemma \ref{lem:continuitylemma}.
Let $r_0:=\frac{1}{2}\textrm{dist}(\mathcal{C},\Gamma_{\rm shock}\cup\Gamma_{\rm sonic})$.

\begin{lemma}\label{lem:interiorestimate}
	Let $\phi$ be a regular solution in the sense of Definition {\rm \ref{def:regular solution}}.
	Assume that, for any $r\in(0,r_0)$,
	\[
	\|D \phi\|_{C(B_r(\MC)\cap\Omega)}\leq \omega(r),
	\]
	where $\omega(r)$ is a nondecreasing continuous function of $r$ on $[0,r_0]$.
	Then, for any $r\in(0,r_0)$,
	\[
	|D^2\phi|\leq C\frac{\omega(2r)}{r}\qquad\,\, \text{on } \partial B_r\cap\Omega,
	\]
	where $C$ depends only on the elliptic ratio $\lambda$ and bound $\|D\phi\|_{0;B_{r_0}(\MC)\cap\Omega}$.
\end{lemma}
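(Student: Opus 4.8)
\medskip

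The plan is to prove this by a rescaling argument that reduces the pointwise second-derivative bound at each point of $\partial B_r\cap\Omega$ to a fixed-size interior (or boundary) regularity estimate for uniformly elliptic equations. First I would fix $r\in(0,r_0)$ and a point $P\in\partial B_r(\MC)\cap\Omega$, so that $|P-\MC|=r$ and, crucially, $B_{2r}(\MC)$ stays away from $\gammashock\cup\gammasonic$ by the choice of $r_0$. The key observation is that, since $D\phi$ is bounded on $B_{2r}(\MC)\cap\Omega$ by $\omega(2r)$ and the subsonicity condition \eqref{subsonicityinOmega} yields a uniform lower bound for $c^2-|D\varphi|^2$ on this set (its distance to $\gammasonic$ being bounded below), equation \eqref{quasilinearequationofphi} is uniformly elliptic there with ellipticity ratio controlled by $\lambda$. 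I would then introduce the normalized function
\[
w(z):=\frac{1}{r}\big(\phi(P+rz)-\phi(P)-r\,D\phi(P)\cdot z\big),
\]
so that $w(0)=0$, $Dw(0)=0$, $Dw(z)=D\phi(P+rz)-D\phi(P)$, and $D^2w(z)=r\,D^2\phi(P+rz)$. A direct computation shows that $w$ solves the rescaled equation $\tilde a_{ij}(Dw,z)\,w_{z_iz_j}=0$, where $\tilde a_{ij}$ are the coefficients of \eqref{quasilinearequationofphi} evaluated at $D\phi=Dw+D\phi(P)$ and at $(\xi,\eta)=P+rz$; since ellipticity is scale-invariant, this equation is uniformly elliptic with the same ratio, and $\|Dw\|_{C^0}\le 2\omega(2r)=:M$ on the relevant unit (half-)ball.

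Next I would invoke the interior regularity theory for such equations. Because $\phi\in C^3$ in the interior, $w$ is a classical solution; writing the frozen coefficients $A_{ij}(z):=\tilde a_{ij}(Dw(z),z)$, the bounded-gradient solution $w$ of the uniformly elliptic non-divergence quasilinear equation is $C^{1,\alpha}$ on a smaller concentric ball, with $\|w\|_{C^{1,\alpha}}\le C(M,\lambda)$. With $Dw\in C^{\alpha}$ now in hand, the coefficients $A_{ij}$ are themselves $C^{\alpha}$ (the $a_{ij}$ being smooth functions of $(D\phi,\xi,\eta)$ on the subsonic range), so $w$ solves the \emph{linear} equation $A_{ij}(z)\,w_{z_iz_j}=0$ with Hölder coefficients. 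Interior Schauder estimates then give $w\in C^{2,\alpha}$ near the origin with $\|w\|_{C^{2,\alpha}}\le C\|w\|_{C^0}\le CM$, where $\|w\|_{C^0}\le M$ follows from $w(0)=0$ and $Dw(0)=0$. In particular $|D^2w(0)|\le CM$, and unscaling via $D^2w(0)=r\,D^2\phi(P)$ yields $|D^2\phi(P)|\le C\omega(2r)/r$ with $C=C(\lambda,\|D\phi\|_{0;B_{r_0}(\MC)\cap\Omega})$.

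It remains to treat the points $P\in\partial B_r\cap\Omega$ lying close to $\Gw$, for which no interior ball of radius comparable to $r$ fits inside $\Omega$. For these I would center a half-ball against the flat wedge side (a straight segment away from $\MC$, so that a half-ball of radius $\kappa r$ avoids $\MC$ for $\kappa$ depending only on the wedge angle), and use the slip condition \eqref{equ:40}, $D\phi\cdot\nnu=0$. After rotating the wedge side onto a coordinate line, this is a homogeneous oblique-derivative boundary condition on a flat boundary with non-degenerate obliqueness (the direction being the geometric normal), for which the same two-step scheme applies via boundary $C^{1,\alpha}$ and boundary Schauder estimates; equivalently one may even-reflect across the flat side and reduce to the interior case. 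The resulting estimate is uniform in $P$ because both the obliqueness and the ellipticity are controlled independently of $r$.

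I would expect the main obstacle to be twofold. First, securing \emph{uniform} ellipticity up to $\MC$: one must rule out that $|D\varphi|$ approaches $c$ as $(\xi,\eta)\to\MC$, which is precisely where the gradient-bound hypothesis and the continuity input (Lemma \ref{lem:continuitylemma}) are essential. Second, the naive route of differentiating \eqref{quasilinearequationofphi} and applying Schauder directly to the equation for $\phi_\xi$ is circular, since the resulting first-order coefficients and right-hand side involve $D^2\phi$, the very quantity to be bounded; the two-step passage through $C^{1,\alpha}$ (to render the coefficients Hölder) before invoking Schauder is what breaks this circularity, and carrying it out uniformly up to the flat wedge boundary is the technically delicate part.
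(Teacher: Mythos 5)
Your proposal is correct and takes essentially the same route as the paper: rescale to unit size (the paper scales the annulus $B_r(\MC)\setminus B_{r/2}(\MC)$ around the corner once, rather than blowing up at each point $P\in\partial B_r\cap\Omega$), then apply the same two-step scheme --- Nirenberg's two-dimensional $C^{1,\alpha}$ estimate for uniformly elliptic equations with merely bounded coefficients to make the coefficients H\"older, followed by Schauder --- and unscale to get $|D^2\phi|\le C\,\omega(2r)/r$. The only real difference is presentational: you treat interior points and points near $\Gw$ (via the slip condition, obliqueness, or even reflection) separately and explicitly, whereas the paper runs the estimates in the scaled region up to the wedge boundary in one stroke, leaving those boundary details implicit.
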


\begin{proof}
	For any fixed $r\in(0,r_0)$,
	we scale $B_r(\MC)$ to $B_2(\MC)$ by the change of coordinates as follows:
	$$
	(x,y):={2 \over r}(\xi,\eta),\qquad \psi(x,y)=\phi(\xi,\eta).
	$$
	
	In the $(x,y)$-coordinates, $\psi$ satisfies
	\begin{equation}\label{linearequationofpsi}
	\sum_{i,j=1}^2 \tilde a_{ij}\psi_{ij}=0\qquad\,\, \mbox{ in } B_2(\MC)\backslash(B_1(\MC)\cup W),
	\end{equation}
	where $\tilde a_{ij}(x,y)=a_{ij}(\frac{r}{2}x, \frac{r}{2}y)$.
	Equation \eqref{linearequationofpsi} is uniformly elliptic in $B_2(\MC)\backslash(B_1(\MC)\cup W)$.
	Then we can apply Nirenberg's estimate ({\rm cf}. Chapter 12 of \cite{GT} or Nirenberg \cite{Ni})
	to obtain
	\begin{equation}
	\|D\psi\|_{C^{\alpha}(B_{7/4}(\MC)\backslash (B_{5/4}(\MC)\cup W))}
	\leq C\|D\psi\|_{L^{\infty}(B_2(\MC))}\leq C{r\over 2}\|D\phi\|_{C(B_r(\MC)\cap\Omega)}
	\leq \frac{C}{2} r\omega(r),
	\label{nirenburgholdergradientestimate}
	\end{equation}
	where $C$ depends only on the elliptic ratio of \eqref{linearequationofpsi}.
	In the $(x,y)$-coordinates, \eqref{linearequationofpsi} can be rewritten as
	\begin{equation}
	\left(c^2-\left(\frac{\psi_x}{r}-rx\right)^2\right)\psi_{xx}-2\left({\psi_x \over r}-rx\right)\left({\psi_y \over r}-ry\right)\psi_{xy}
	+\left(c^2-\left({\psi_y \over r}-ry\right)^2\right)\psi_{yy}=0,
	\label{quasilinearequationofpsi}
	\end{equation}
	where the sonic speed $c$ can be represented as
	\begin{equation}
	c^2=c^2_0-(\gamma-1)\left(\psi-\psi_x x-\psi_y y+{\psi^{2}_{x}+\psi^2_y \over 2 r^2}\right).
	\label{sonicspeedexpressedbypsi}
	\end{equation}
	
	With (\ref{nirenburgholdergradientestimate})--(\ref{sonicspeedexpressedbypsi}),
	we obtain the estimate of the H\"older norm of $\tilde a_{ij}$ as
	\begin{align*}
	\|\tilde a_{ij}\|_{C^{\alpha}(B_{7/4}(\MC)\backslash(B_{5/4}(\MC)\cup W))}
	\leq & \; C(\lambda,\frac{1}{r}\|D\psi\|_{C^{\alpha}(B_{7/4}(\MC)\backslash(B_{5/4}(\MC)\cup W))})\\
	\leq & \;  C(\lambda,\|D \phi\|_{C(B_{r_0}(\MC)\cap\Omega)}),
	\end{align*}
	where $C(\lambda,\|D \phi\|_{C(B_{r_0}(\MC)\cap\Omega)})$ depends only on the elliptic ratio $\lambda$
	and on the bound $\|D \phi\|_{C(B_{r_0}(\MC)\cap\Omega)}$.
	Then we can apply the Schauder estimate to (\ref{linearequationofpsi}) to obtain
	\begin{align*}
	\|D^2\psi\|_{C(B_{13/8}(\MC)\backslash (B_{11/8}(\MC)\cup W))}
	\leq  C\|D \psi\|_{C(B_2(\MC)\backslash (B_1(\MC)\cup W))}
	\leq C\omega(r)r,
	\end{align*}
	where $C$ is a universal constant that depends only on the elliptic ratio $\lambda$ and bound $\|D\phi\|_{0;B_{r_0}(\MC)\cap\Omega}$
	and may be different at each occurrence.
	Scaling back to the $(\xi,\eta)$-coordinates, we have
	\[
	\|D^2\phi\|_{C(\partial B_{3r/4})}\leq C{\omega(r)\over r}.
	\]
	This completes the proof.
\end{proof}

Based on Lemma \ref{lem:interiorestimate} and Lemma \ref{lem:continuitylemma}, we have the following lemma.

\begin{lemma}\label{lem:continuityofvelocity}
If $\phi$ is a regular solution in the sense of Definition {\rm \ref{def:regular solution}},
then
$$
\phi\in C^1(\overline{B_{r_0}(\MC)\cap \Lambda}).
$$
\end{lemma}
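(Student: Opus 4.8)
The plan is to prove the statement by showing that each first derivative $\varphi_\xi$ and $\varphi_\eta$ (equivalently $\phi_\xi,\phi_\eta$, since they differ by the smooth functions $\xi,\eta$) extends continuously to the corner $\MC$. Because we already know $\phi\in C^1(\overline{\Omega}\setminus\MC)$, the only missing information is the existence of a limit of $D\phi$ at $\MC$ itself, and establishing this gives $\phi\in C^1(\overline{B_{r_0}(\MC)\cap\Lambda})$. First I would localize: by the definition of $r_0$, the neighborhood $\mathcal N:=B_{r_0}(\MC)\cap\Lambda$ meets neither $\Gshock$ nor $\Gsonic$, so its boundary consists only of the two straight faces of $\Gw$ meeting at $\MC$. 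On $\mathcal N\cap\Omega$ the subsonicity \eqref{subsonicityinOmega} makes equation \eqref{quasilinearequationofphi} uniformly elliptic, and $\phi$ is $C^3$ in the interior and satisfies the slip condition \eqref{equ:40} on the faces.

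The key structural step is to differentiate the equation in its divergence form \eqref{1.1}, namely $\div(\rho D\varphi)+2\rho=0$. Differentiating in $\xi$ and commuting derivatives, one finds that $w:=\varphi_\xi$ solves $\partial_i\big(A_{ik}\partial_k w+b_i w\big)+2\rho_{p_k}\partial_k w+2\rho_z w=0$, where $A_{ik}=\rho\,\delta_{ik}+\rho_{p_k}\varphi_i$ and $b_i=\rho_z\varphi_i$. The point is that, through Bernoulli's law \eqref{equ:bernoulli law for density}, the density $\rho$ and its derivatives $\rho_{p_k},\rho_z$ depend only on $(\varphi,D\varphi)$; hence the coefficients are functions of $(\varphi,D\varphi)$ evaluated at $(\xi,\eta)$ and contain \emph{no} second derivatives of $\phi$. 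Consequently $w$ solves a genuinely linear, uniformly elliptic equation whose coefficients are bounded (by the Lipschitz bound on $\phi$) and continuous away from $\MC$, with no singular forcing at the corner; this is exactly the type covered by Lemma \ref{lem:continuitylemma}. Differentiating the slip condition \eqref{equ:40} tangentially along each straight face of $\Gw$ yields the corresponding homogeneous first-order boundary condition for $w$ on $\Gw\setminus\MC$ required by that lemma. The same computation applies verbatim to $\varphi_\eta$, with the same ellipticity constants.

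It then remains to invoke the two lemmas. Since $\phi$ is Lipschitz, $w\in L^\infty(\mathcal N\cap\Omega)$. Applying Lemma \ref{lem:interiorestimate} with the constant modulus $\omega\equiv\|D\phi\|_{C(B_{r_0}(\MC)\cap\Omega)}$ gives the bound $|D^2\phi|\le C/r$ on $\partial B_r\cap\Omega$, which I would use to verify that $w$ is an admissible solution of the linear problem on a full punctured neighborhood of $\MC$, so that no anomalous contribution is concentrated at the corner. Lemma \ref{lem:continuitylemma} then upgrades the mere boundedness of $w$ to continuity up to $\MC$. Combining the conclusions for $\varphi_\xi$ and $\varphi_\eta$ shows that $D\varphi=D\phi-(\xi,\eta)$, and hence $D\phi$, extends continuously to $\MC$, which is the assertion.

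The main obstacle is precisely the lack of $C^2$-regularity at the corner, which rules out the textbook route of differentiating, applying Schauder estimates, and invoking Hopf's lemma directly. The resolution rests on the two structural facts above: that the divergence form of the potential-flow equation forces the first derivatives to solve a linear elliptic equation whose coefficients involve only $(\varphi,D\varphi)$, so that the low regularity of $\phi$ is not fatal; and that the corner-continuity result of the Appendix converts $L^\infty$ bounds on these derivatives into continuity. The delicate point is verifying that $w$ satisfies the hypotheses of Lemma \ref{lem:continuitylemma} in a neighborhood of $\MC$ — this is where the circle-wise second-derivative control from Lemma \ref{lem:interiorestimate} is needed.
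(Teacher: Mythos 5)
Your overall skeleton matches the paper's (differentiate the equation to get a linear elliptic equation for first derivatives, use Lemma \ref{lem:interiorestimate} to control coefficients, and conclude continuity at $\MC$ via Lemma \ref{lem:continuitylemma}), but there are two genuine gaps, and the first one is fatal to the argument as written. Lemma \ref{lem:continuitylemma} requires a \emph{mixed} boundary condition: the Dirichlet condition $v=0$ on one face of the wedge and an oblique condition $v_{\boldsymbol{\beta}}=0$ on the other. Your quantity $w=\varphi_\xi$ satisfies a Dirichlet condition on \emph{neither} face: the slip condition \eqref{equ:40} gives $\phi_\eta=0$ on the horizontal face and $\sin\sigma\,\phi_\xi+\cos\sigma\,\phi_\eta=0$ on the slanted face, so tangential differentiation yields only oblique conditions for $\varphi_\xi$ on both faces. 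With oblique conditions on both faces the conclusion of Lemma \ref{lem:continuitylemma} is false in general (its decay estimate $|v|\le Cr^\alpha\|v\|_{L^\infty}$ fails already for constants), so the lemma cannot be invoked for $\varphi_\xi$. This is precisely why the paper does not use the coordinate derivatives: it takes $v_1=\phi_\eta$ and $v_2=\sin\sigma\,\phi_\xi+\cos\sigma\,\phi_\eta$, i.e.\ the normal derivatives associated with the two faces, so that each $v_i$ vanishes (Dirichlet) on its own face by \eqref{equ:40} and satisfies an oblique condition on the other face, the latter obtained by combining the tangential derivative of the slip condition with the interior equation. Continuity of $D\phi$ at $\MC$ then follows because $v_1,v_2$ are linearly independent combinations of $\phi_\xi,\phi_\eta$ (as $\sin\sigma\neq0$).

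The second gap is your claim that, because the differentiated \emph{divergence-form} equation $\partial_i\bigl(A_{ik}\partial_k w+b_iw\bigr)+2\rho_{p_k}\partial_kw+2\rho_zw=0$ has coefficients depending only on $(\varphi,D\varphi)$, the equation for $w$ is ``exactly the type covered by Lemma \ref{lem:continuitylemma}.'' That lemma is stated for non-divergence equations $\sum a_{ij}v_{ij}+\sum b_iv_i=0$ with \emph{no} zeroth-order term, and its proof is a pointwise barrier/comparison argument. Your coefficients $A_{ik},b_i$ are only bounded near $\MC$ (they are as regular as $D\varphi$), so to reach non-divergence form you must expand the divergence, which reintroduces first-order coefficients of size $|\partial_iA_{ik}|\le C|D^2\phi|\sim C/r$ --- so the ``bounded coefficients, no singularity'' claim evaporates, and the $C/r$ bound from Lemma \ref{lem:interiorestimate} is needed exactly as in the paper (your own last sentence concedes this, contradicting the boundedness claim) --- and, worse, it produces zeroth-order terms $(\partial_ib_i)w+2\rho_zw$ with a sign-indefinite coefficient of size $C/r$, which Lemma \ref{lem:continuitylemma} does not allow and which would require redoing its comparison argument. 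The paper avoids this entirely by differentiating the non-divergence form of the equation: there the derivatives of the coefficients multiply $\phi_{\xi\eta}=w_\xi$ and $\phi_{\eta\eta}=w_\eta$, so one gets first-order terms with $|b_i|\le C/r$ and \emph{no} zeroth-order term, matching the hypotheses of Lemma \ref{lem:continuitylemma} exactly.
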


\begin{proof} Let $v_1=\phi_{\eta}$ and $v_2=\sin\sigma\phi_{\xi}+\cos{\sigma}\phi_{\eta}$.
Similar to
\eqref{gammaderivativeofequationofphi}--\eqref{equ:43},
we can derive the equation and the boundary conditions of $v_1$ and $v_2$, respectively, near the wedge corner.
The only difference is that $v_1=0$ on $\Gamma_{\rm wedge}\cap\{\eta=0\}$
and $v_2=0$ on $\Gamma_{\rm wedge}\cap\{\eta<0\}$.

Without loss of the generality, let us only consider the case for $w:=v_1=\phi_{\eta}$,
since the argument for $v_2$ is similar.
Then, in order to apply Lemma \ref{lem:continuitylemma}, we must show
\begin{equation}\label{4.1a}
|b_i|\leq\frac{C}{r},
\end{equation}
where $r$ is the distance to the wedge corner.
It suffices to show
$$
|D^2\phi|\leq\frac{C}{r},
$$
which can be achieved by Lemma \ref{lem:interiorestimate} and \eqref{subsonicityinOmega}.
More precisely,
for any $r\in(0,r_0)$,
\[
|D^2\phi|\leq\frac{M(\|D\phi\|_{C^0(\Omega)})}{r}\qquad\,\, \mbox{on }\partial B_r(\MC)\cap\Omega.
\]

By a straightforward calculation, we have
\begin{align*}
0=&\; \left(\phi_{\xi\xi}+{2 a_{12}\over a_{11}}\phi_{\xi\eta}+{a_{22}\over a_{11}}\phi_{\eta\eta}\right)_\eta\\
  =&\; w_{\xi\xi}+{2a_{12}\over a_{11}}w_{\xi\eta}+{a_{22}\over a_{11}}w_{\eta\eta}
  +\left(\frac{2a_{12}}{a_{11}}\right)_{\eta}w_\xi+\left(\frac{a_{22}}{a_{11}}\right)_\eta w_\eta,
\end{align*}
where $a_{11}=c^2-(\phi_{\xi}-\xi)^2$, $a_{12}=-(\phi_{\xi}-\xi)(\phi_{\eta}-\eta)$,
$a_{22}=c^2-(\phi_{\eta}-\eta)^2$, and $|(\frac{2a_{12}}{a_{11}}, \frac{a_{22}}{a_{11}})_\eta|\leq C|D^2\phi|$.
Therefore, $w$ satisfies the equation of form
$$
\sum_{i,j=1}^2a_{ij}w_{ij}+\sum_{i=1}^2b_i w_i=0
$$
with the estimate that $|b_i(\xi,\eta)|\leq {C\over r}$.  This completes the proof.
\end{proof}

\begin{lemma}\label{lem:vanishingofvelocity}
If $\phi$ is a regular solution in the sense of Definition {\rm \ref{def:regular solution}},
then $D\phi=(0,0)$ at the wedge corner.
\end{lemma}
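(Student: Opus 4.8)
The plan is to exploit the $C^1$--regularity up to the corner just obtained in Lemma \ref{lem:continuityofvelocity}, which upgrades the a priori Lipschitz bound on $\phi$ to genuine continuity of $D\phi$ on $\overline{B_{r_0}(\MC)\cap\Lambda}$. Once $D\phi$ is known to extend continuously to $\MC$, the value $D\phi(\MC)$ is well defined, and the strategy reduces to reading off two independent linear relations for $(\phi_\xi(\MC),\phi_\eta(\MC))$ from the slip conditions on the two wedge faces and then solving the resulting $2\times2$ homogeneous system.

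Concretely, I would recall the two slip conditions near the corner already identified in the proof of Lemma \ref{lem:continuityofvelocity}: on the horizontal face $\Gw\cap\{\eta=0\}$ one has $\phi_\eta=0$, while on the slanted face $\Gw\cap\{\eta<0\}$ one has $\sin\sigma\,\phi_\xi+\cos\sigma\,\phi_\eta=0$. Both faces terminate at $\MC$. Approaching $\MC$ along each face and passing to the limit, using the continuity of $D\phi$, I would obtain
\begin{equation*}
\phi_\eta(\MC)=0,\qquad \sin\sigma\,\phi_\xi(\MC)+\cos\sigma\,\phi_\eta(\MC)=0.
\end{equation*}
Substituting the first identity into the second gives $\sin\sigma\,\phi_\xi(\MC)=0$, and since $\sigma\in(0,\pi)$ forces $\sin\sigma>0$, this yields $\phi_\xi(\MC)=0$; together with $\phi_\eta(\MC)=0$ I would conclude $D\phi(\MC)=(0,0)$.

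I do not expect a serious obstacle at this stage, since the entire analytic difficulty has been front-loaded into the continuity of $D\phi$ at the corner (Lemma \ref{lem:continuityofvelocity}), which itself rests on the continuity result for merely $L^\infty$ solutions of degenerate elliptic equations proved in the Appendix (Lemma \ref{lem:continuitylemma}). The only point deserving a word of care is geometric: one must check that the two slip relations are genuinely \emph{independent}, i.e.\ that the normals $(0,1)$ and $(\sin\sigma,\cos\sigma)$ of the two faces are linearly independent, so that the homogeneous $2\times2$ system admits only the trivial solution. This independence is precisely the content of $\sin\sigma\neq0$, equivalently the nondegeneracy of the corner angle $\theta_{\rm w}=\pi-\sigma$, and it is exactly where the hypothesis $\sigma\in(0,\pi)$ enters.
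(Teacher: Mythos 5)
Your proposal is correct and follows exactly the paper's (very terse) argument: combine the $C^1$--continuity of $D\phi$ up to the corner from Lemma \ref{lem:continuityofvelocity} with the slip conditions on the two wedge faces, whose normals $(0,1)$ and $(\sin\sigma,\cos\sigma)$ are linearly independent since $\sigma\in(0,\pi)$, forcing $D\phi(\MC)=(0,0)$. Your write-up merely makes explicit the $2\times 2$ homogeneous system that the paper leaves implicit, so there is nothing to add.
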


This lemma follows from Lemma \ref{lem:continuityofvelocity}, the boundary condition $\phi_{\nnu}$ on $\Gw\backslash\MC$,
and the fact that the unit normal on $\Gamma_{\rm wedge}\cap\{\eta=0\}$ is different from the one
on $\Gamma_{\rm wedge}\cap\{\eta<0\}$.

\smallskip
By Lemmas \ref{lem:continuityofvelocity}--\ref{lem:vanishingofvelocity},
there exists $f(r)$ that is a nondecreasing continuous function on $[0,1]$
with $f(r)\rightarrow 0$ as $r\rightarrow 0$ such that, for any $r\in(0,r_0)$,
\begin{equation}
\|D \phi\|_{C(B_r(\MC)\cap\Omega)}\leq f(r).
\label{functiondecayestimate}
\end{equation}
Then, by Lemma \ref{lem:interiorestimate}, we have the following lemma.

\begin{lemma}\label{lem:interiorsecondorderderivativeestimateforpotentialfunction}
If $\phi$ is a regular solution in the sense of Definition {\rm \ref{def:regular solution}},
then, for any $r\in(0,r_0)$,
\begin{equation}
 |D^2\phi|\leq M(\|D\phi\|_{C(B_{r_0}(\MC)\cap\Omega)})\frac{f(2r)}{r} \qquad\,\,\text{on } \partial B_r(\MC)\cap\Omega,
\end{equation}
where $f(r)$ is a nondecreasing continuous function on $[0,r_0)$ with $f(r)\rightarrow 0$ as $r\rightarrow 0$.
\end{lemma}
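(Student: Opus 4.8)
The plan is to read off this estimate as a direct specialization of the interior estimate in Lemma~\ref{lem:interiorestimate}, now that the velocity has been shown to carry a genuine modulus of decay at the wedge corner. First I would invoke the decay bound \eqref{functiondecayestimate}: by Lemmas~\ref{lem:continuityofvelocity}--\ref{lem:vanishingofvelocity} the velocity $D\phi$ extends continuously up to $\MC$ and vanishes there, so there is a nondecreasing continuous function $f$ on $[0,r_0)$ with $f(r)\to 0$ as $r\to 0$ for which $\|D\phi\|_{C(B_r(\MC)\cap\Omega)}\le f(r)$ holds for every $r\in(0,r_0)$. This is exactly the shape of hypothesis demanded by Lemma~\ref{lem:interiorestimate}, with $f$ playing the role of the modulus $\omega$.

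Next I would apply Lemma~\ref{lem:interiorestimate} with the choice $\omega=f$. Since $f$ is nondecreasing and continuous, the hypotheses are met verbatim, and the conclusion is precisely $|D^2\phi|\le C\,\frac{f(2r)}{r}$ on $\partial B_r(\MC)\cap\Omega$ for each $r\in(0,r_0)$, with the constant $C$ from that lemma depending only on the elliptic ratio $\lambda$ and on $\|D\phi\|_{0;B_{r_0}(\MC)\cap\Omega}$; I would record this as $C=M(\|D\phi\|_{C(B_{r_0}(\MC)\cap\Omega)})$, matching the stated form. The application is legitimate because equation \eqref{quasilinearequationofphi} is uniformly elliptic on $B_{r_0}(\MC)\cap\Omega$: by the subsonicity \eqref{subsonicityinOmega} together with $r_0=\tfrac12\,\mathrm{dist}(\MC,\Gsh\cup\Gso)$, the ball $B_{r_0}(\MC)$ stays away from the degeneracy locus $\Gso$ and from $\Gsh$, so the elliptic ratio $\lambda$ is controlled there and the constant in Lemma~\ref{lem:interiorestimate} is uniform.

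In truth there is no substantive obstacle left at this stage: all the real work sits upstream, in the Nirenberg--Schauder rescaling argument that produced Lemma~\ref{lem:interiorestimate} and, more crucially, in Lemmas~\ref{lem:continuityofvelocity}--\ref{lem:vanishingofvelocity}, which upgrade the a priori $L^\infty$ bound on the velocity to an honest decay $f(r)\to 0$ at the corner. The only point worth watching is the bookkeeping of radii -- the $C^1$ estimate on $\partial B_r$ is fed by the gradient bound at the slightly larger radius $2r$, so it is exactly the monotonicity of $f$ that keeps the right-hand side in the clean form $f(2r)/r$. Beyond this, the present lemma is nothing more than the instance $\omega=f$ of Lemma~\ref{lem:interiorestimate}.
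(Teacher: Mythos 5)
Your proposal is correct and is precisely the argument the paper intends: the paper states this lemma as an immediate consequence of the decay bound \eqref{functiondecayestimate} (obtained from Lemmas \ref{lem:continuityofvelocity}--\ref{lem:vanishingofvelocity}) fed into Lemma \ref{lem:interiorestimate} with $\omega=f$, which is exactly your specialization. Your remarks on uniform ellipticity near the corner and on the monotonicity of $f$ handling the $f(2r)/r$ bookkeeping also match the paper's implicit reasoning.
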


\medskip
We now prove the following key estimate for the sign of a special directional velocity,
which is the weak maximum principle for the directional velocity.

\begin{lemma}\label{lem:maximum}
Let $\phi\in C^3(\overline\Omega\backslash(\overline\gammasonic\cup\overline{\gammashock}\cup\MC))\cap C^1({\overline\Omega})$
be a nonconstant solution of the elliptic equation
\begin{equation}
a_{11}\phi_{\xi\xi}+2a_{12}\phi_{\xi\eta}+a_{22}{\phi_{\eta\eta}}=0  \qquad  \text{ in } \Omega,
\label{linearequationofphi}
\end{equation}
where $a_{ij}\in C^1(\overline\Omega\backslash(\overline\gammasonic\cup\overline{\gammashock}\cup\MC)), i,j=1,2$.
Assume that \eqref{linearequationofphi} is uniformly elliptic near the wedge corner
and strictly elliptic in $\Omega\cup\Gw^0$,
and $\phi_{\nnu}:=D\phi\cdot\nnu=0$ on $\Gamma_{\text{\rm wedge}}$.
Let $\vg:=(\sin \tsigma, \cos\tsigma)$ for $\tsigma\in (\sigma, \pi)$.
If
\[
\phi_{\vg}:=D\phi\cdot\vg \geq0 \qquad  \text{ on }\gammashock\cup\Gamma_{\rm sonic},
\]
then
\[
\phi_{\vg}>0 \qquad \text{ in } (\Omega\cup\gammawedge)\backslash\MC.
\]
\end{lemma}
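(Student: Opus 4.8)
The plan is to set $w:=\phi_{\vg}=D\phi\cdot\vg$ and to show that $w$ solves a \emph{homogeneous} linear elliptic equation with no zeroth-order term, so that a maximum-principle argument can be run. First I would differentiate equation \eqref{linearequationofphi} in the constant direction $\vg$. Since the operator $\partial_{\vg}=\gamma_1\partial_\xi+\gamma_2\partial_\eta$ commutes with $\partial_\xi,\partial_\eta$, this gives $a_{11}w_{\xi\xi}+2a_{12}w_{\xi\eta}+a_{22}w_{\eta\eta}=-\big((\partial_{\vg}a_{11})\phi_{\xi\xi}+2(\partial_{\vg}a_{12})\phi_{\xi\eta}+(\partial_{\vg}a_{22})\phi_{\eta\eta}\big)$. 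The residual second-order terms on the right do not obviously reduce to first-order terms in $w$, but they can be removed by solving, for $(\phi_{\xi\xi},\phi_{\xi\eta},\phi_{\eta\eta})$, the $3\times3$ linear system built from $w_\xi=\gamma_1\phi_{\xi\xi}+\gamma_2\phi_{\xi\eta}$, $w_\eta=\gamma_1\phi_{\xi\eta}+\gamma_2\phi_{\eta\eta}$, together with \eqref{linearequationofphi} itself. Its determinant equals $a_{11}\gamma_2^2-2a_{12}\gamma_1\gamma_2+a_{22}\gamma_1^2$, i.e. the principal symbol evaluated at the direction perpendicular to $\vg$, which is strictly positive by ellipticity. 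Hence each $\phi_{ij}$ is a bounded combination of $w_\xi,w_\eta$, and $w$ satisfies $a_{11}w_{\xi\xi}+2a_{12}w_{\xi\eta}+a_{22}w_{\eta\eta}+b_1w_\xi+b_2w_\eta=0$ with $|b_i|\le C|D^2\phi|$. By Lemma \ref{lem:interiorsecondorderderivativeestimateforpotentialfunction} this yields $|b_i|\le Cf(2r)/r$ near $\MC$, a \emph{small} multiple of $1/r$ since $f(r)\to0$.

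Next I would record the boundary data for $w$. On $\gammashock\cup\gammasonic$ the hypothesis gives $w\ge0$; for solutions of {\bf Problem 2.3}, where $D\phi=(u_1,0)$ on $\gammasonic$, one even has $w\equiv u_1\sin\tsigma>0$ there (using $\tsigma\in(\sigma,\pi)$), which guarantees $w\not\equiv0$. On $\gammawedge$ I would convert the slip condition $\phi_{\nnu}=0$ into an oblique condition for $w$: differentiating $\phi_{\nnu}=0$ along the straight wedge face gives $\phi_{\nnu\ttau}=0$, and combining this with \eqref{linearequationofphi} written in the $(\ttau,\nnu)$-frame expresses $w_{\nnu}$ as $\beta\,w_{\ttau}$ for a bounded factor $\beta$. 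This derivation needs $\vg\cdot\ttau\neq0$, i.e. $\vg$ not normal to either face; this is exactly what $\tsigma\in(\sigma,\pi)$ secures, since it excludes the alignments $\vg\parallel(0,\pm1)$ and $\vg\parallel(\sin\sigma,\cos\sigma)$.

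With the equation and boundary conditions in hand, the maximum-principle argument proceeds as follows. Since $w\in C(\overline\O)$ by Lemma \ref{lem:continuityofvelocity}, it attains its minimum on $\overline\O$. A negative minimum cannot occur at an interior point (the equation is homogeneous with no zeroth-order term, so the strong maximum principle would force $w$ constant, contradicting $w>0$ on $\gammasonic$), nor on $\gammashock\cup\gammasonic$ (where $w\ge0$), nor on $\gammawedge\backslash\MC$: at such a boundary minimum $w_{\ttau}=0$, so the oblique condition gives $w_{\nnu}=0$, contradicting Hopf's lemma (applicable there, as the equation is strictly elliptic with bounded coefficients away from $\MC$). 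Thus the only remaining possibility is a negative minimum located at the corner $\MC$.

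Ruling out a negative minimum at $\MC$ is the crux and the main obstacle, precisely because the drift coefficients blow up like $1/r$ while $\phi$ is only $C^1$ there, so neither the classical Hopf lemma nor the classical strong maximum principle applies. Here I would establish a weak Hopf-type statement at the corner by the integration method: testing the equation for $w$ against a suitable cutoff on $B_r(\MC)\cap\O$ and integrating, and using that the drift is $\epsilon(r)/r$ with $\epsilon(r)=Cf(2r)\to0$, one shows the first-order term is subordinate to the principal part and cannot sustain an interior-pointing negative minimum at $\MC$; the continuity up to $\MC$ furnished by Lemma \ref{lem:continuitylemma} (and the Appendix) is what makes the limiting argument legitimate. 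Once $\min_{\overline\O}w\ge0$ is established, the strong maximum principle in $\O$ together with the Hopf/oblique argument on $\gammawedge\backslash\MC$ upgrade $w\ge0$ to the strict conclusion $w>0$ in $(\O\cup\gammawedge)\backslash\MC$, since $w\not\equiv0$.
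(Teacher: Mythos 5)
Your interior and wedge-face steps coincide with the paper's own proof. Differentiating \eqref{linearequationofphi} along $\vg$ and then using the equation itself to express the second derivatives of $\phi$ through $w_\xi,w_\eta$ is exactly the paper's computation (it inverts a $2\times 2$ system after dividing by $a_{22}$; your $3\times 3$ determinant $a_{22}\gamma_1^2-2a_{12}\gamma_1\gamma_2+a_{11}\gamma_2^2$ is the paper's $J$ times $a_{22}$), and your oblique condition on the two faces, with obliqueness equivalent to $\vg\cdot\ttau\neq0$ and secured by $\tsigma\in(\sigma,\pi)$, is the paper's condition $w_{\boldsymbol{\beta}_+}=0$ with $\boldsymbol{\beta}_+\cdot\nnu\neq0$. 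Up to this point your proposal is sound.

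The genuine gap is at the corner, which you correctly identify as the crux but then do not prove. The resolution requires no PDE argument at $\MC$ at all, because it is already contained in the hypotheses: $\phi\in C^1(\overline\Omega)$ and $\phi_{\nnu}=0$ on \emph{both} faces of $\gammawedge$, whose normals are linearly independent; hence $D\phi(\MC)$ is orthogonal to two independent directions, so $D\phi(\MC)=0$ and $w(\MC)=0$. A negative minimum at $\MC$ is therefore impossible, and the paper simply concludes $w\geq-\varepsilon$ on $\overline\Omega$ for every $\varepsilon>0$ (the $\varepsilon$ handles $\gammashock\cup\gammasonic$, where the equation may degenerate and only continuity of $w$ is available) and lets $\varepsilon\to0$. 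Your substitute --- ``a weak Hopf-type statement at the corner by the integration method'' --- is announced rather than proved: no test function, no integral identity, and no reason why a drift of size $f(2r)/r$ precludes a negative corner minimum. In fact the principle you invoke is false at that level of generality: since the equation for $w$ has no zeroth-order term and the face conditions are homogeneous in derivatives, whenever the corner problem admits a homogeneous solution $r^\alpha g(\theta)$ with $g>0$ and $\alpha>0$ (the type of solution behind Lieberman's nonuniqueness examples for oblique corner problems \cite{l-cmuc1999477}), subtracting it from a negative constant yields a solution, with zero drift, whose strict negative minimum sits exactly at the corner. What excludes this scenario here is precisely the extra fact $w(\MC)=0$, which you never extract. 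Note also that the integration method is the paper's tool for the \emph{Main Theorem}, where it is used to contradict the conclusion of this very lemma, and even there it needs $D\phi(\MC)=0$ as input; so your route, if completed, would circle back to the observation you skipped. Two smaller points: within this lemma you should not invoke Lemmas \ref{lem:continuityofvelocity}--\ref{lem:interiorsecondorderderivativeestimateforpotentialfunction} (they serve to verify the hypotheses of this lemma for regular solutions; here $w\in C(\overline\Omega)$ is given), and you cannot use $D\phi=(u_1,0)$ on $\gammasonic$ to rule out $w\equiv0$, since that is not a hypothesis of the lemma --- instead note that $w\equiv0$ forces $\phi$ to be affine, hence constant by the slip conditions on the two faces, contradicting non-constancy.
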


\begin{proof} We divide the proof into three steps.

\smallskip
{1}. Let $w:=\phi_{\vg}$.
For any $\varepsilon>0$, by assumption,
we can see that $w\geq-\varepsilon$ in a neighborhood of
$\Gamma_{\text{\rm shock}}\cup \Gamma_{\text{\rm sonic}}$,
since $\phi\in C^1({\overline\Omega\backslash\mathcal{C}})$.
This $\varepsilon$ can be arbitrarily small
if the neighborhood is chosen to be very small.

Note that $a_{ij}\in C^1(\overline\Omega\backslash(\overline\gammasonic\cup\overline{\gammashock}\cup\MC))$.
We can differentiate (\ref{linearequationofphi}) to obtain
\begin{align}
0 &=\big({a_{11}\over a_{22}}\phi_{\xi\xi}+{2a_{12}\over a_{22}}\phi_{\xi\eta}+\phi_{\eta\eta}\big)_\vg \notag\\
  &={a_{11}\over a_{22}} w_{\xi\xi}  +{2a_{12}\over a_{22}}w_{\xi\eta} +w_{\eta\eta}
  +\big({a_{11}\over a_{22}}\big)_\vg \phi_{\xi\xi}+\big({2a_{12}\over a_{22}}\big)_\vg \phi_{\xi\eta}.
\label{gammaderivativeofequationofphi}
\end{align}

Then, using again (\ref{linearequationofphi}),
we can write $\phi_{\xi\eta}$ and $\phi_{\xi\xi}$ as a linear combination of $w_\xi$ and $w_\eta$.
More precisely, we have
\begin{align}
\begin{pmatrix}
\phi_{\xi\xi}\\
\phi_{\xi\eta}
\end{pmatrix}
=J^{-1}
\begin{pmatrix}
\sin\tsigma-\frac{2a_{12}}{a_{22}}\cos\tsigma&-\cos\tsigma\\ \frac{a_{11}}{a_{22}}\cos\tsigma&\sin\tsigma
\end{pmatrix}
\begin{pmatrix}
w_\xi\\w_\eta
\end{pmatrix},
\label{secondorderderivativebyderivativeofz}
\end{align}
where $J=\sin^2\tsigma-\frac{a_{12}}{a_{22}}\sin(2\tsigma)+\frac{a_{11}}{a_{22}}\cos^2\tsigma>0$
in $\overline\Omega\backslash(\overline\gammasonic\cup\overline{\gammashock}\cup\MC)$,
thanks to the ellipticity of \eqref{linearequationofphi}.

Plugging (\ref{secondorderderivativebyderivativeofz}) into (\ref{gammaderivativeofequationofphi}),
by a straightforward calculation, we find that
$w$ satisfies a linear strictly elliptic equation with  coefficients  locally bounded in $\Omega$ and without zeroth order term.
Thus, $w$ cannot achieve the local minimum anywhere in $\Omega$ by the maximum principle.

\smallskip
{2}. Now we show that $w$ cannot achieve the local minimum on  $\Gamma_{\rm wedge}$.

First,  $\phi_\nnu=\phi_\eta=0$ on $\Gamma_{\text{\rm wedge}}^+:=\Gamma_{\text{\rm wedge}}\cap\{\eta=0\}$.
Taking the tangential derivative along the wedge boundary, we have
$$
\phi_{\xi\eta}=0\qquad\, \mbox{on }\Gamma_{\rm wedge}^+.
$$
Plugging it into (\ref{secondorderderivativebyderivativeofz}) yields
\begin{equation}\label{equ:42}
w_{\boldsymbol{\beta}_+}=0\qquad\, \mbox{on }\Gamma_{\rm wedge}^+,
\end{equation}
where $\boldsymbol{\beta}_+:= (a_{11}\cos\tsigma, a_{22}\sin\tsigma)$.
Notice that $\tsigma\in (\sigma, \pi)$ so that
\[
\boldsymbol{\beta}_+\cdot \nnu=a_{22}\sin\tsigma\neq 0.
\]

Next, on  $\Gamma_{\text{\rm wedge}}^-:=\Gamma_{\text{\rm wedge}}\cap\{\eta<0\}$, the argument is similar. We rotate coordinates by angle $\sigma$ clockwise so that, in the new coordinates, the boundary part
$\Gamma_{\text{\rm wedge}}^-$ lies on the positive $\xi$-axis, i.e. $\nu=(0,1)$ on $\Gamma_{\text{\rm wedge}}^-$. Equation
(\ref{linearequationofphi}) in the rotated coordinates has the similar form with the new coefficients $\hat a_{ij}$, satisfying the ellipticity
condition with the same constants. Vector $\vg$ in the rotated coordinates is
$\vg:=(\sin (\tsigma-\sigma), \cos(\tsigma-\sigma))$. Now
we can repeat the previous calculation working the rotated coordinates and obtain
\begin{equation}\label{equ:43}
w_{\boldsymbol{\beta}_-}=0\qquad\, \mbox{on }\Gamma_{\rm wedge}^-,
\end{equation}
where $\boldsymbol{\beta}_-:= (\hat a_{11}\cos(\tsigma-\sigma), \hat a_{22}\sin(\tsigma-\sigma))$ in these coordinates. Noting that  $\tsigma-\sigma\in (0, \pi)$, we have
\[
\boldsymbol{\beta}_-\cdot\boldsymbol{\nu}=\hat a_{22}\sin(\tsigma-\sigma)\neq0   \qquad\, \mbox{on }\Gamma_{\rm wedge}^-.
\]
Therefore, the boundary conditions \eqref{equ:42}--\eqref{equ:43}
are oblique. Then, by Hopf's lemma,
we conclude that $w$ cannot achieve its minimum on $\Gamma_{\text{wedge}}$, except $\mathcal{C}$.

\smallskip
{3}. By the $C^1(\overline{\Omega})$-regularity of $\phi$,
we see that $D\phi=0$ at $\mathcal{C}$.
This means that $w=0$ at $\mathcal{C}$.
Thus, we finally obtain
\begin{equation}\label{equ:44}
w\geq-\epsilon \qquad \text{in $\overline\Omega$}.
\end{equation}

\medskip
Let $\varepsilon\rightarrow0$. Then we have
$$
w\geq0\qquad \mbox{in } \overline\Omega.
$$
Therefore, by the strong maximum principle,
if $\phi$ is not a constant, then $w>0$ in $(\Omega\cup\gammawedge)\backslash\MC$.
This completes the proof.
\end{proof}

Based on Lemma \ref{lem:maximum}, we have the following corollary for the regular solutions
defined in Definition \ref{def:regular solution}.

\begin{corollary} Let $\vg$ be as in Definition {\rm \ref{def:regular solution}} for regular solutions.
Then
\begin{equation}
\phi_{\vg}>0 \qquad \text{ in } (\Omega\cup\gammawedge)\backslash\MC.
\label{positivityofz}
\end{equation}
\end{corollary}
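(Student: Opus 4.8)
The plan is to obtain this corollary as an immediate application of Lemma \ref{lem:maximum}, so the entire task reduces to checking that a regular solution $\phi$ in the sense of Definition \ref{def:regular solution}, together with its distinguished direction $\vg=(\sin\tsigma,\cos\tsigma)$, satisfies every hypothesis of that lemma. Most of the hypotheses are essentially recorded already: the structural regularity $\phi\in C^3(\overline\Omega\backslash(\overline\gammasonic\cup\overline\gammashock\cup\MC))\cap C^1(\overline\Omega)$, the slip condition $\phi_\nnu=0$ on $\gammawedge$, the membership $\tsigma\in(\sigma,\pi)$, and the ellipticity. The only genuinely new input is the sign condition $\phi_\vg\ge 0$ on $\gammashock\cup\gammasonic$, and within that the estimate on $\gammashock$ is the crux.

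First I would assemble the regularity and ellipticity. The $C^3$-interior and $C^1$-up-to-$\overline\Omega$ regularity is part of {\bf Problem 2.3} away from the corner $\MC$, and at the corner Lemma \ref{lem:continuityofvelocity} upgrades the a priori Lipschitz bound to $\phi\in C^1(\overline{B_{r_0}(\MC)\cap\Lambda})$, so that $\phi\in C^1(\overline\Omega)$. Writing the coefficients of \eqref{quasilinearequationofphi} as $a_{11}=c^2-(\phi_\xi-\xi)^2$, $a_{12}=-(\phi_\xi-\xi)(\phi_\eta-\eta)$, $a_{22}=c^2-(\phi_\eta-\eta)^2$ with $c^2$ determined by $(\phi,D\phi)$ through Bernoulli's law, these are smooth functions of $(\phi,D\phi)$ and hence lie in $C^1(\overline\Omega\backslash(\overline\gammasonic\cup\overline\gammashock\cup\MC))$. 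Strict ellipticity in $\Omega\cup\Gw^0$ is exactly the subsonicity \eqref{subsonicityinOmega}. For uniform ellipticity near $\MC$ I would combine the continuity of $D\phi$ up to the corner (Lemma \ref{lem:continuityofvelocity}) with $D\phi(\MC)=(0,0)$ (Lemma \ref{lem:vanishingofvelocity}): the coefficient matrix then extends continuously to $\MC$, where the limiting pseudo-velocity is $D\pSi=-(\xi,\eta)|_{\MC}$, which is strictly subsonic, so $c^2-|D\pSi|^2$ is bounded below by a positive constant on a small corner ball. The slip condition is condition (iii) of {\bf Problem 2.3}.

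Next comes the sign condition. On $\gammasonic$ it is immediate: by condition (ii), $D\phi=(u_1,0)$ there, so $\phi_\vg=u_1\sin\tsigma>0$ since $u_1>0$ and $\tsigma\in(\sigma,\pi)\subset(0,\pi)$. On $\gammashock$ the boundary value \eqref{bou:RHregular} forces $\phi\equiv\phi_0$ to be \emph{constant} (indeed $\phi_0=\frac{c_1^2-c_0^2}{\gamma-1}$), so the tangential derivative vanishes and $D\phi$ is parallel to the shock normal $\nnu_{\rm sh}$; writing $\phi_\vg=(\vg\cdot\nnu_{\rm sh})\,(D\phi\cdot\nnu_{\rm sh})$, the non-tangency clause of Definition \ref{def:regular solution} guarantees $\vg\cdot\nnu_{\rm sh}\neq0$ along $\gammashock$, hence of constant sign on the connected $C^1$-curve $\gammashock$. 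Orienting $\nnu_{\rm sh}$ outward from $\Omega$, the sonic endpoint $P_1$, where $D\phi=(u_1,0)$ and $\nnu_{\rm sh}=(1,0)$ by Lemma \ref{prop2.1 incident shock}, fixes this sign to be positive, so $\vg\cdot\nnu_{\rm sh}>0$ throughout and the claim reduces to $D\phi\cdot\nnu_{\rm sh}\ge 0$, i.e.\ nonnegativity of the outward normal derivative of $\phi$ on $\gammashock$.

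The main obstacle is precisely this last sign, since no gradient or Rankine--Hugoniot condition is imposed on $\gammashock$ in {\bf Problem 2.3}. I would establish it through the maximum principle for $\phi$ itself: as $\phi$ solves $\sum a_{ij}\phi_{ij}=0$ with no zeroth-order term, it has no interior maximum, and the Neumann condition on $\gammawedge$ together with Hopf's lemma rules out an interior maximum on the wedge; hence the maximum is attained on $\gammashock\cup\gammasonic$. On $\gammasonic$ one has $\phi=\phi_1=u_1\xi-\tfrac{u_1^2}{2}$, which is increasing in $\xi$ and, along the bounding arc running from $P_1$ toward the wedge (where $\xi$ decreases), stays $\le\phi_1(P_1)=\phi_0$; combined with $\phi=\phi_0$ on $\gammashock$ this gives $\phi\le\phi_0$ in $\Omega$. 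Thus $\phi$ attains its maximum along $\gammashock$, and the outward normal derivative is $\ge 0$ there, so $\phi_\vg\ge0$ on $\gammashock$. With $\phi_\vg\ge0$ verified on $\gammashock\cup\gammasonic$, Lemma \ref{lem:maximum} applies verbatim and yields $\phi_\vg>0$ in $(\Omega\cup\gammawedge)\backslash\MC$, which is \eqref{positivityofz}.
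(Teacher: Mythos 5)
Your proposal is correct and takes essentially the same route as the paper's own proof: reduce to Lemma \ref{lem:maximum}, get $\phi_{\vg}>0$ on $\gammasonic$ by direct computation from condition (ii), and get $\phi_{\vg}\geq 0$ on $\gammashock$ by using the strong maximum principle and Hopf's lemma on the wedge to show that $\sup_{\overline\Omega}\phi$ is attained on $\gammashock$ (where $\phi\equiv\phi_0$), then invoking the non-tangency of $\vg$ from Definition \ref{def:regular solution}. In fact, you supply details that the paper leaves implicit -- the comparison $\phi_1\le\phi_1(P_1)=\phi_0$ along $\gammasonic$, the sign bookkeeping $\phi_{\vg}=(\vg\cdot\nnu_{\rm sh})(D\phi\cdot\nnu_{\rm sh})$ anchored at $P_1$, and the uniform ellipticity near $\MC$ via $D\phi(\MC)=(0,0)$.
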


\begin{proof}
Equation \eqref{linearequationofphi} is strictly elliptic in $\Omega\cup\Gw^0$ due to (\ref{subsonicityinOmega}). We also note that Hopf's lemma can be applied at every point of
$\Gamma_{\text{\rm wedge}}$ including the corner point $\MC$ because the interior angle
at $\MC$ is $\pi+\sigma >\pi$, i.e., the interior ball condition holds at $\MC$, and $\phi$
is $C^1$ up to $\MC$ by Lemma \ref{lem:vanishingofvelocity}.
Thus, by the strong maximum principle and Hopf's lemma,
$$
\sup_\Omega \phi=\sup_{\partial \Omega\backslash \Gamma_{\text{\rm wedge}}} \phi=
\sup_{\overline\gammasonic\cup\overline{\gammashock}} \phi.
$$

On $\overline\gammasonic$, we have $\phi=\phi_1$  by condition (ii) of Problem 2.3. Note that
$\Omega\subset \{\xi < \xi_1\}$ as discussed at the beginning of \S \ref{regularsolution}, and thus
$\gammasonic\subset \{\xi < \xi_1\}$. Also, $\phi_1<\phi_0=0$ in $\{\xi < \xi_1\}$. Thus
$\phi<0$ on $\gammasonic$.

By the boundary condition \eqref{bou:RHregular}, $\phi=0$ on $\Gsh$.

This shows that $\max_{\overline\Omega} \phi=0$, and the maximum value $\phi=0$ is attained at every point of $\gammashock$. From this,
\begin{equation}\label{phiOnShock}
D\phi=-\phi_\nnu \nnu  \qquad\,\, \text{ and } \;\;\phi_\nnu \le 0\qquad\,\, \text{ on }\gammashock.
\end{equation}

Also, recall that $\gammashock$ is $C^1$ up to its endpoints by Remark \ref{rem-2.1}, and $\phi\in C^1(\overline\Omega)$
by conditions of Problem 2.3 and Lemma \ref{lem:continuityofvelocity}. Then, at the point $A$ of intersection of $\overline\gammashock$ with $\overline\gammasonic$, we have $D\phi=D\phi_1$
 by condition (ii) of Problem 2.3, which shows that the interior (for $\Omega$) unit normal to $\gammashock$ at $A$
 is $\nnu(A)=\frac{D(\phi_0-\phi_1)}{|D(\phi_0-\phi_1)|}=(-1, 0)$, where we used (\ref{2.11a}) in the last equality.  Thus $\nnu(A)\cdot \vg = -\sin \tsigma <0$, where we have used that $\tsigma\in (0, \pi)$.
Since $\Gamma_{\text{\rm shock}}$ is not tangential to $\vg$ at any point on $\Gsh$ and $\phi\in C^1{(\overline\Omega\backslash\MC)}$,
we have
\[\nnu\cdot \vg < 0  \qquad\,\, \text{ on }\gammashock. \]
Combining this with (\ref{phiOnShock}), we obtain
\[
w:= \phi_{\vg}\geq0 \qquad\,\, \text{ on }\gammashock.
\]

On $\Gamma_{\text{\rm sonic}}$, $w=(u_1,0)\cdot \vg=u_1\sin\tsigma>0$.

Finally, since $\phi\in C^3(\overline\Omega\backslash(\overline{\Gamma_{\text{\rm sonic}}}\cup\overline{\gammashock}\cup\MC))$,
$a_{ij}\in C^1(\overline\Omega\backslash(\overline\gammasonic\cup\overline{\gammashock}\cup\MC))$.
Then Lemma \ref{lem:maximum} applies.
\end{proof}

Based on these lemmas, we are now going to establish the main theorem, Theorem \ref{thm}.

\medskip
\begin{proof}[\bf Proof of the main theorem --- Theorem {\rm \ref{thm}}.]
We prove the main theorem by deriving a direct contradiction to (\ref{positivityofz}).

Since the Euler equations are invariant with respect to the Galilean transformation,
we rotate and reflect the coordinates such that $\vg$ in the new coordinates
is the horizontal direction pointing to the left,
\emph{i.e.},  $\mathcal{R}_{\ast}\vg=-\partial_\xi$,
where $\mathcal{R}$ is the corresponding rotation and reflection operator.
After this transformation, $\phi$ still satisfies a uniform elliptic equation
of second order in $(B_r(\MC)\cap\Omega)\backslash\MC$ for any $r\in(0,r_0)$.
For notational simplicity, we still use $(\xi,\eta)$ as the coordinates after
this transformation. Furthermore, we use notation $(u,v)=(\phi_\xi, \phi_\eta)$.

As before, let $w:= \phi_{\vg}$.
Then, as shown in Fig. \ref{fig:PotentialFunctionphcannotbeMonotonicalonggammaDirection},
$w$ defined on the left-hand side of
Fig. \ref{fig:PotentialFunctionphcannotbeMonotonicalonggammaDirection}
is transformed into $-u=-\phi_\xi$
corresponding to  that on the right-hand side of
Fig. \ref{fig:PotentialFunctionphcannotbeMonotonicalonggammaDirection}.
Moreover, the result from \eqref{positivityofz} in the old coordinates that
$w\geq\delta$ on $\partial B_r(\MC)\cap\Omega$
is transformed into
\[
u\leq-\delta \qquad \text{ on } \partial B_r(\MC)\cap\Omega
\]
in the new coordinates.

\begin{figure}
\includegraphics[height=4.1cm]{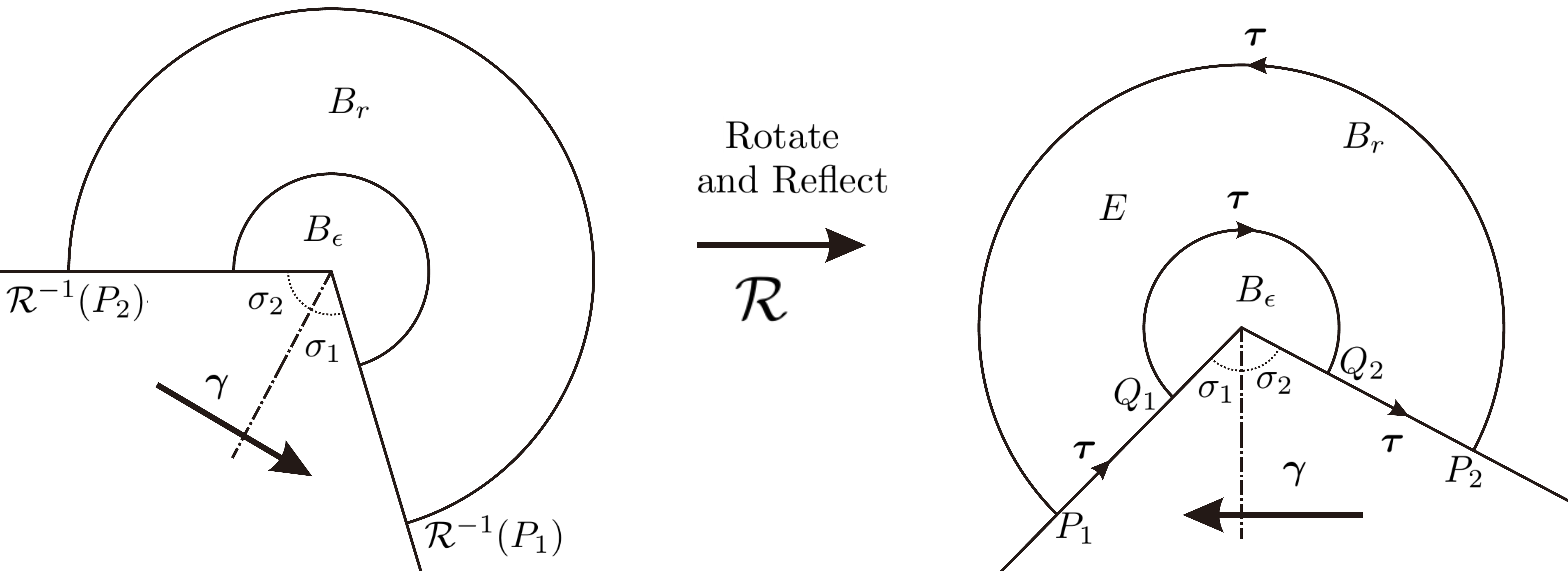}
\caption{The potential function $\phi$ cannot be monotonic in the $\boldsymbol{\gamma}$ direction}
\label{fig:PotentialFunctionphcannotbeMonotonicalonggammaDirection}
\end{figure}

As shown in Fig. \ref{fig:PotentialFunctionphcannotbeMonotonicalonggammaDirection},
the dashed line is defined to pass corner $\MC$ and to be perpendicular to $\vg$.
The angles between the dashed line and the two sides of \Gammawedge are $\sigma_1$ and $\sigma_2$,
where both $\sigma_1$ and $\sigma_2$ belong to $(0, \pi-\sigma)$.
In fact, $\tilde\sigma$
in Definition \ref{def:regular solution} equals to $\sigma+\sigma_1=\pi-\sigma_2$.

Now we start to work in the new coordinates and derive a contradiction
from an integral estimate of $u$.

Define
\[
h=(u+\delta)^+.
\]
Since $u<-\delta$ on $\partial B_r(\MC)\cap\Omega$, we have
$$
h=0\qquad\text{ on } \partial B_r(\MC)\cap\Omega.
$$
Moreover, it is direct to see that
$$
h\leq\delta+\|D\phi\|_{C(\Omega)}\qquad \text{ in  }B_r(\MC)\cap \Omega.
$$
For any $\epsilon\in(0,{r \over 2})$, define domain $E$ as
$$
E:=(B_r(\MC)\backslash B_\epsilon(\MC))\cap\Omega
$$
with vertices
\begin{align*}
Q_1=\partial B_\epsilon(\MC)\cap \Gamma_{\text{\rm wedge}}\cap\{\xi<0\},
  \qquad &Q_2=\partial B_\epsilon(\MC)\cap \Gamma_{\text{\rm wedge}} \cap\{\xi>0\},\\
P_1=\partial B_r(\MC)\cap \Gamma_{\text{\rm wedge}}\cap\{\xi<0\},\qquad
   &P_2=\partial B_r(\MC)\cap \Gamma_{\text{\rm wedge}} \cap\{\xi>0\}.
\end{align*}
Then define $I$ as follows:
\begin{equation}\label{equ:50}
I=\int_{\{u>-\delta\}\cap E}\Big({a_{11}\over a_{22}}u_\xi^2+{2 a_{12} \over a_{22}}u_\xi u_\eta+u_{\eta}^2\Big)d\xi d\eta.
\end{equation}

By a straightforward calculation, we have
\begin{align*}
I&=\int_{E}\Big({a_{11} \over a_{22}}u_\xi h_\xi+{2a_{12} \over a_{22}}u_\eta h_\xi+u_\eta h_\eta\Big) d\xi d\eta\\
 &=\int_E  \Big(\nabla\cdot\Big({a_{11} \over a_{22}}u_\xi h+{2a_{12} \over a_{22}}u_\eta h,u_\eta h\Big)
                             -\nabla\cdot\Big({a_{11}\over a_{22}}u_\xi+{2 a_{12} \over a_{22}} u_\eta,u_{\eta}\Big)h\Big)d\xi d\eta\\
 &=- \int_E   \nabla\cdot\big(v_\eta h, -v_\xi h\big)d\xi d\eta \\
 &=-\int_{\partial E} v_\T h\,ds\\
 &=-\int_{\partial B_\epsilon(\MC)\cap\Omega}  v_\T h\, ds -\int_{\overline{P_1Q_1}} v_\T h\, ds
   -\int_{\overline{Q_2P_2}} v_\T h\,ds,
\end{align*}
where $\T$ is the unit tangential vector of $\partial E$,
with the direction illustrated in Fig. \ref{fig:PotentialFunctionphcannotbeMonotonicalonggammaDirection},
and the line integral is respect to the arc length $s$.
Here, for the third identity, we have used the equation
that $\frac{a_{11}}{a_{22}}u_\xi+\frac{2a_{12}}{a_{22}}u_\eta=-v_\eta$,
while, for the fifth identity, we have used the fact that $h=0$ on $\partial B_r(\MC)\cap\Omega$.

Using the slip boundary condition \eqref{equ:40} on $\Gamma_{\text{\rm wedge}}$, we have
$$
v=\frac{u}{\tan\sigma_1} \qquad \ \ \text{on \ $\overline{P_1Q_1}$}
$$
and
$$
v=-\frac{u}{\tan\sigma_2} \qquad \ \ \text{on \ $\overline{P_2Q_2}$}.
$$
Applying the relations above, we have
\begin{align*}
&-\int_{\overline{P_1Q_1}} v_\T h\,ds -\int_{\overline{Q_2P_2}}  v_\T h\,ds \\
&=-\int_{\overline{P_1Q_1}}  \frac{h_\T h}{\tan\sigma_1}\,ds  +\int_{\overline{Q_2P_2}} \frac{h_\T h}{\tan\sigma_2}\,ds\\
&=-\int_{\overline{P_1Q_1}} \frac{(h^2)_\T}{2\tan\sigma_1}\,ds   +\int_{\overline{Q_2P_2}} \frac{(h^2)_\T}{2\tan\sigma_2}\,ds \\
&=-\frac{h^2(Q_1)}{2\tan\sigma_1}+\frac{h^2(P_1)}{2\tan\sigma_1}-\frac{h^2(Q_2)}{2\tan\sigma_2}+\frac{h^2(P_2)}{2\tan\sigma_2}.
\end{align*}
By Lemma \ref{lem:continuityofvelocity},
$\phi$ is $C^1$ at the wedge corner and then,  using Lemma \ref{lem:vanishingofvelocity},
$D\phi=0$ at the wedge corner.
Thus, $h(Q_1)$ and $h(Q_2)$ converge to $\delta$ as $\epsilon$ converges to zero.
Therefore, as $\epsilon\rightarrow0$,
\begin{equation} \label{gammawedge}
- \int_{\overline{P_1Q_1}}  v_\T h\, ds  -\int_{\overline{Q_2P_2}} v_\T h\, ds
\rightarrow
 -\frac{\delta^2}{2}\left(\frac{1}{\tan\sigma_1}+\frac{1}{\tan\sigma_2}\right)
 =-\frac{\delta^2\sin(\pi-\sigma)}{2\sin\sigma_1\sin\sigma_2}.
\end{equation}

By Lemma \ref{lem:interiorsecondorderderivativeestimateforpotentialfunction},
we conclude that there exists a function $f(\epsilon)$ with the properties that
$f(\epsilon)$ is a nondecreasing continuous function of $\epsilon\in[0,r_0)$,
and $f(\epsilon)\rightarrow 0$ as $\epsilon\rightarrow 0$, such that
$$
|D^2\phi|\leq C{f(2\epsilon) \over \epsilon}\qquad\mbox{on }\partial B_\epsilon(\MC)\cap\Omega.
$$
Then, as $\epsilon\rightarrow 0$,
\begin{equation}
\int_{\partial B_\epsilon(\MC)\cap\Omega}v_\T h\,ds
\leq \big(\delta+\|D\phi\|_{C(\Omega)}\big)\int_{\partial B_\epsilon(\MC)\cap\Omega}|D^2\phi|\,ds
\leq 2\pi\epsilon\; C {f(2\epsilon)\over \epsilon}\rightarrow 0.
\label{innerboundary}
\end{equation}

Estimates (\ref{gammawedge})--(\ref{innerboundary}) together imply that, as $\epsilon\rightarrow 0$,
\[
I\rightarrow -\frac{\delta^2\sin(\pi-\sigma)}{2\sin\sigma_1\sin\sigma_2}<0.
\]

On the other hand, $I$ is nonnegative from \eqref{equ:50} and the ellipticity.
This is a contradiction, so that the regular solution defined in Definition \ref{def:regular solution}
does not exist indeed.
\end{proof}


\appendix
\section{Continuity estimate at the corner}\label{continuity}

In this appendix, we show that a solution of some type of linear elliptic
equations, which is only assumed to be $L^{\infty}$ at the wedge corner $\mathcal{C}$,
is actually continuous.

\begin{lemma}\label{lem:continuitylemma}
	Let $v\in L^\infty(\overline{B_1\cap \Lambda})\cap C^2(\overline{B_1\cap \Lambda}\setminus\mathcal{C})$
	for the wedge corner $\mathcal{C}$
	satisfy
	\begin{align*}
	\sum_{i,j=1}^2a_{ij}v_{ij}+\sum_{i=1}^2 b_iv_i=0 \qquad  &\text{in } B_1\cap \Lambda,\\
	v=0 \qquad & \text{on $\partial\Lambda\cap B_1\cap\{\xi>0\}$}, \\
	v_{\boldsymbol{\beta}}=0\qquad & \text{on    $\partial\Lambda\cap B_1\cap\{\xi<0\}$,}
	\end{align*}
	where $\sum_{i,j=1}^2a_{ij}y_iy_j\ge \lambda|\mathbf{y}|^2$ for each $\mathbf{y}=(y_1,y_2)\in \mr^2$,
	$|a_{ij}|\le \lambda^{-1}$, $|b_i|\leq \frac{C_E}{r}$,  $|\boldsymbol{\beta}\cdot\nnu|>{1\over C_E}$,
	and $|\boldsymbol{\beta}|=1$ for some positive constants $\lambda$  and $C_E$,
	and $r=\sqrt{\xi^2+\eta^2}$ denotes the distance to the wedge corner.
	Then $v$ is continuous in $\overline\MB:=\overline{B_1\cap\Lambda}$.
	Moreover,
	$$
	|v(\xi, \eta)|\le Cr^\alpha \|v\|_{L^\infty(\MB)} \qquad \mbox{in $\MB$},
	$$
	where $C>0$ and $\alpha\in (0,1)$ depend only on $\lambda$ and $C_E$.
\end{lemma}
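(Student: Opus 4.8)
The plan is to prove the pointwise decay bound $|v(\xi,\eta)|\le C r^\alpha\|v\|_{L^\infty(\MB)}$ directly by comparison with an explicit barrier; from this the continuity at $\MC$ (with limit value $0$) follows immediately, and continuity on all of $\overline{\MB}$ then follows by combining it with $v\in C^2(\overline{\MB}\setminus\MC)$. Denote the operator by $Lv:=\sum_{i,j}a_{ij}v_{ij}+\sum_i b_iv_i$. The guiding observation is that the class of equations is invariant under the scaling $x\mapsto\rho x$: the second--order and the drift terms both scale like $\rho^{-2}$, precisely because $|b_i|\le C_E/r$ is \emph{critical}, and the constants $\lambda,C_E$, the normalization $|\boldsymbol{\beta}|=1$, and the bound $|\boldsymbol{\beta}\cdot\nnu|>1/C_E$ are all preserved. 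This criticality is exactly what makes a barrier of power type admissible. Since $-v$ solves the same homogeneous problem, it suffices to establish the one--sided bound $v\le Cr^\alpha\|v\|_{L^\infty}$.

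Next I would construct the barrier. Writing $B_1\cap\Lambda$ in polar coordinates as $\{0<r<1,\ \theta_-<\theta<\theta_+\}$ with opening $\theta_+-\theta_-=\pi+\sigma$, I look for $w=r^\alpha g(\theta)$ with $g>0$ on $[\theta_-,\theta_+]$ and $\alpha\in(0,1)$ to be chosen small. A direct computation gives $Lw=r^{\alpha-2}G(\theta)$, where the $r$--powers of the second--order and drift parts coincide, so $G$ is independent of $r$. Its decisive contribution is $a_{ij}(\hat\theta)_i(\hat\theta)_j\,g''$, whose coefficient is $\ge\lambda$ by uniform ellipticity, while all $\alpha$--dependent and drift--generated terms are bounded by $C(\lambda,C_E)\big(|g|+|g'|\big)+O(\alpha)$. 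The aim is to choose $g$ with $g''$ sufficiently negative relative to $|g|,|g'|$, together with $\alpha$ small, so that $G(\theta)\le 0$ for \emph{every} coefficient field in the class, making $w$ a supersolution. Simultaneously $g$ is chosen so that $w\ge 0$ on the Dirichlet ray and, using that $|\boldsymbol{\beta}\cdot\nnu|>1/C_E$ gives $\boldsymbol{\beta}\cdot\nnu$ a fixed sign, so that $\boldsymbol{\beta}\cdot\nabla w$ carries the sign favorable for the oblique comparison on the remaining ray.

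Finally I would run the comparison, handling the fact that $v$ is only $L^\infty$ at $\MC$ by a removable--singularity device. For $\epsilon>0$ set $w_\epsilon:=M r^\alpha g(\theta)+\epsilon\,r^{-\mu}g(\theta)$, where $M:=\|v\|_{L^\infty(\MB)}/\min_{[\theta_-,\theta_+]}g$ and $\mu>0$ is small enough that $r^{-\mu}g$ is \emph{also} a supersolution with the same boundary signs (the computation is identical with exponent $-\mu$ in place of $\alpha$). Then $w_\epsilon-v$ is a supersolution in the punctured sector; it is $\ge 0$ on the outer arc $\partial B_1\cap\Lambda$ by the choice of $M$, equals $w_\epsilon\ge 0$ on the Dirichlet ray, tends to $+\infty$ at $\MC$ because of the $\epsilon\,r^{-\mu}$ term, and satisfies the oblique inequality on the other ray. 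The maximum principle (there is no zeroth--order term) together with Hopf's lemma at the oblique boundary then forces $w_\epsilon-v\ge 0$ throughout $(B_1\cap\Lambda)\setminus\{\MC\}$; letting $\epsilon\to 0$ yields $v\le M r^\alpha g(\theta)\le Cr^\alpha\|v\|_{L^\infty}$, and the same argument applied to $-v$ gives the full estimate.

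I expect the principal difficulty to be the barrier construction itself: producing a single angular profile $g$ and exponent $\alpha\in(0,1)$ that makes $w$ a supersolution for \emph{all} admissible coefficient fields at once \emph{and} meets the correct sign conditions on both rays of the \emph{reentrant} sector $\theta_+-\theta_-=\pi+\sigma>\pi$. The tension is genuine: keeping $g$ strictly positive across an opening exceeding $\pi$ limits how concave $g$ can be, so the negative $g''$--contribution is weak and $\alpha$ must be taken small; rather than a naive concave bump, $g$ should be built from the first eigenfunction of the associated angular problem with the mixed Dirichlet--oblique data, and one must verify that its negative contribution dominates the drift--generated lower--order terms \emph{uniformly} over the coefficient class. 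This balancing is exactly where uniform ellipticity and the bound $|\boldsymbol{\beta}\cdot\nnu|>1/C_E$ are essential.
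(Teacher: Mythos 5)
Your global scheme is the same as the paper's: a barrier of the form $r^{\alpha}g(\theta)$ with a strictly positive angular profile, a singular correction $\varepsilon r^{-\mu}g(\theta)$ to handle the fact that $v$ is only $L^\infty$ at $\MC$, comparison plus Hopf's lemma at the oblique ray, and then $R\to 0$, $\varepsilon\to 0$. Indeed your $w_\epsilon=M r^{\alpha}g+\epsilon r^{-\mu}g$ is literally the paper's $V_\varepsilon=2\|v\|_{L^\infty(\MB)}\bW^{+}+\varepsilon \bW^{-}$ with $\bW^{\pm}=r^{\pm\hat\alpha}\bar h(\theta)$, and your observation that the supersolution computation is insensitive to the sign of the exponent is exactly what the paper uses. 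The problem is that the one step you defer --- the construction of $g$ --- is the entire content of the lemma, and the route you indicate (the first eigenfunction of the mixed Dirichlet--oblique angular problem) fails for this coefficient class.

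Here is the concrete obstruction. Writing the operator on $w=r^{\alpha}g(\theta)$ in polar form and bounding only the drift gives
\begin{equation*}
\sum_{i,j=1}^2 a_{ij}D_{ij}w+\sum_{i=1}^2 b_iD_iw
\;\le\;\Big(a_{\theta\theta}\,g''+2a_{r\theta}(\alpha-1)\,g'
+\alpha\big((\alpha-1)a_{rr}+a_{\theta\theta}\big)g
+C_E\big(|g'|+|\alpha|\,g\big)\Big)r^{\alpha-2}.
\end{equation*}
All terms proportional to $g$ carry a factor $\alpha$, so they can indeed be absorbed by taking $\alpha$ small at the very end; but the mixed-derivative term $2a_{r\theta}(\alpha-1)g'$ and the drift contribution $C_E|g'|$ carry \emph{no} factor of $\alpha$. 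Since $g''<0$ and the worst case of $a_{\theta\theta}g''$ is $\lambda g''$, the genuine requirement is the pointwise inequality $\lambda|g''|>(2\lambda^{-1}+C_E)|g'|$ with room to spare, where $C_E/\lambda$ may be arbitrarily large. A trigonometric eigenfunction-type profile cannot satisfy this: positivity on an opening of length $\pi+\sigma$ caps its frequency $\nu$ at a universal constant, so $|g''|\sim\nu^2$ while $|g'|\sim\nu$ at the points where $|g'|$ peaks (and near the Dirichlet zero the ratio $|g''|/|g'|$ even degenerates to $0$). No smallness of $\alpha$ repairs this, because the offending terms do not carry $\alpha$. So your diagnosed ``tension'' is real but mis-located: it is not $|g''|$ versus $|g|$ (that is handled by $\alpha$), it is $|g''|$ versus $|g'|$. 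The paper's resolution, following Lieberman, is the exponential profile $\bar h(\theta)=1-\mu e^{-L\theta}$ with $\mu=\tfrac12 e^{-\pi L}$: then $\bar h\in[\tfrac12,1]$ on $[-\pi,\pi]$ for every $L$, while $\bar h''=-L\bar h'$, so the ratio $|\bar h''|/|\bar h'|=L$ is a free parameter that can be taken larger than any constant depending on $\lambda$ and $C_E$, completely decoupled from positivity; moreover $\bar h'>0$, with $\theta$ oriented so that the oblique ray is $\{\theta=\hat\theta\}$, gives the strict boundary sign $\bW_{\bbeta}<0$ through $|\bbeta\cdot\nnu|>1/C_E$, again for $|\alpha|\le\hat\alpha$ of either sign. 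Equivalently: the ``associated angular problem'' you should solve is not an eigenvalue problem for a fixed operator but the extremal ODE $\lambda g''+(C_E+2\lambda^{-1})|g'|=0$, whose solutions are precisely such exponentials. With that replacement, the rest of your argument closes as written.
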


\begin{remark}
	The proof of this lemma follows the methods in Lieberman \cite{Li}
	and the references therein, more specifically, their versions in \cite{CFbook,EllingLiu2}.
\end{remark}

\begin{proof}
	We use the comparison function considered in  step 3 of the proof of
	Theorem 4.3.16 in \cite{CFbook}.
	
	\newcommand{\bW}{w}
	\newcommand \htL{ L}
	
	We use the polar coordinates $(r, \theta)$ centered at the origin and choose
	the orientation of $\theta$ and the direction of ray $\theta=0$ so that
	\begin{align*}
	&\MB=\{|\theta|\le \hat\theta \}\cap B_1,\\
	&\partial\Lambda\cap B_1\cap\{\xi>0\}=
	\partial\MB\cap\{\theta= -\hat\theta \},\\
	&\partial\Lambda\cap B_1\cap\{\xi<0\}=
	\partial\MB\cap\{\theta= \hat\theta \},
	\end{align*}
	where $\hat\theta :=\frac 12(2\pi-\theta_{\rm w}) \in (0, \pi)$.
	We show the existence of $\bW\in C^\infty(\overline\Omega\setminus\{{\bf 0}\})$
	such that
	\begin{align}\label{PD-Ineq-BarW-corner}
	& \sum_{i,j=1}^2a_{ij}D_{ij}\bW +\sum_{i=1}^2b_iD_i\bW   <0
	\qquad\; \mbox{ in } \MB, \\
	\label{BarW-ONgam2ineq-barrierEL-bc}
	& \bW_{\boldsymbol{\beta}}<0 \qquad\qquad\qquad\qquad\qquad\quad\quad \mbox{ on $\partial\MB\cap\{\theta= \hat\theta \}$}.
	\end{align}
	Then we consider the function
	\begin{equation}\label{def-BarW-corner}
	\bW(r, \theta)=r^{\alpha} \bar h(\theta) \qquad\text{ in } \MB \,\,\,\mbox{with $\bar h(\theta)=1-\mu e^{-\htL\theta}$},
	\end{equation}
	where the constants $\alpha\in (-1,1)$, $\mu\ge 0$, and $\htL\ge 1$
	will be fixed below.
	
	For each $\htL\ge 1$, we choose $\mu=\frac 12 e^{-\pi  \htL}$
	so that $\bar h(\theta)\ge \frac 12$ for all $\theta\in[-\pi, \pi]$.
	We use such a constant $\mu=\mu(L)$ from now on.
	
	Now we show that $\bW$ is a supersolution of the equation after a certain choice of $\htL$, $\alpha$.
	Fix a point $(\xi, \eta)\in\MB$,
	and rotate the Cartesian coordinates in $\mr^2$ to
	become
	the radial and tangential coordinates at $(\xi, \eta)$.
	We denote by $a_{rr}$, $a_{r\theta}$, {\rm etc.}, the coefficients
	of the equation in these rotated coordinates.
	Below, $C$ is a universal constant that may be different at each occurrence,
	depending only on $\lambda$ and $C_E$.
	We use that $\alpha\in(-1,1)$ and $\frac{1}{2}\le h(\theta)\le 1$
	to compute at $(\xi, \eta)$
	\begin{equation*}
	\begin{split}
	\sum_{i,j=1}^2a_{ij}D_{ij}\bW &=a_{rr}\bW_{rr}+2a_{r\theta}(r^{-1}\bW_{r\theta}-r^{-2}\bW_\theta)
	+a_{\theta\theta}(r^{-2}\bW_{\theta\theta}+r^{-1}\bW_r)\\
	&=\Big(\mu \htL e^{-\htL\theta}\big(-a_{\theta\theta}\htL+2a_{r\theta}(\alpha-1)\big)
	+\alpha\big( (\alpha-1)a_{rr}+a_{\theta\theta}\big) \bar h(\theta)\Big) r^{\alpha-2} \\
	&\le\Big(\mu \htL e^{-\htL\theta}\big(-\frac{\htL}C+C\big)
	+C|\alpha|\Big) r^{\alpha-2}.
	\end{split}
	\end{equation*}
	Then, for any $\htL\ge 10 C^2$, using that $\mu=\frac 12 e^{-\pi \htL}>0$,
	we can choose $\hat\alpha>0$ small so that, for any $|\alpha|\le \hat\alpha$,
	\begin{equation*}
	\begin{split}
	&\sum_{i,j=1}^2a_{ij}D_{ij}\bW\le-\mu \frac{\htL^2}{2C}
	e^{-\htL\theta} r^{\alpha-2}.
	\end{split}
	\end{equation*}

	Then we have
	\begin{equation*}
	\begin{split}
	\sum_{i,j=1}^2a_{ij}D_{ij}\bW+\sum_{i=1}^2b_iD_i\bW
	&\le-\mu \frac{\htL^2}{2C} e^{-\htL\theta} r^{\alpha-2}+\frac Cr |D\bW|\\
	&\;\;\le \big(-\mu \frac{\htL^2}{2C} e^{-\htL\theta}+C(\alpha+
	\mu \htL e^{-\htL\theta})\big)r^{\alpha-2}\\
	&\;\;=\big(\mu\htL e^{-\htL\theta}(-\frac{\htL}{2C} +C) +C\alpha\big)r^{\alpha-2}.
	\end{split}
	\end{equation*}
	We first choose $\htL$ large so that $-\frac{\htL}{2C} +C\le -\frac{\htL}{4C}$,
	{\rm i.e.}, $\htL\ge 4C^2$.
	Then, for such $\htL$,
	using that $\mu=\frac 12 e^{-\pi\htL}>0$ and
	$\theta\in[-\pi, \pi]$,  we can choose $\alpha(\htL)$ small
	so that the last expression is negative, {\rm i.e.}, that
	(\ref{PD-Ineq-BarW-corner}) holds.
	
	To show (\ref{BarW-ONgam2ineq-barrierEL-bc}), let
	$\nnu$ be the interior unit normal on $\partial\MB\cap\{\theta= \hat\theta\}$
	with respect to $\MB$, and let $\ttau$
	be the unit tangent vector
	pointing away from the corner.
	Then
	\begin{align*}
	\bW_{\bbeta}&=-\frac {\bbeta\cdot\nnu}{r} \bW_\theta+
	\bbeta\cdot\ttau\,\bW_r\\
	&\le \big(-(\bbeta\cdot\nnu)\mu\htL e^{-\htL\theta}+C|\alpha|\big)r^{\alpha-1}\\
	&\le
	\Big(-\frac{\mu\htL}{C_E} e^{-\htL\theta}+C|\alpha|\Big)r^{\alpha-1}.
	\end{align*}
	
	Therefore, for any $\htL\ge 1$, using that $\mu=\frac 12 e^{- \pi \htL}>0$,
	we can choose $\hat\alpha>0$ small such that, for any $|\alpha|\le \hat\alpha$,
	the last
	expression is negative for any $\theta=\hat\theta\in[0, \pi]$.
	Then (\ref{BarW-ONgam2ineq-barrierEL-bc}) holds.
	
	Now we fix $\htL$ sufficiently large to satisfy all the conditions
	stated above. This also fixes $(\mu, \hat\alpha)$.
	
	Let $\bW^\pm=r^{\pm\hat\alpha} \bar h(\theta)$,
	where $\bar h(\theta)$ is from
	(\ref{def-BarW-corner}).
	Then, using that $\bar h(\theta)\ge \frac 12$,
	\begin{equation}\label{comparFunctLarge}
	\begin{split}
	\bW^+&\ge \frac 12  \qquad\quad\mbox{ on } \partial\MB\cap\partial B_1,\\
	\bW^-&\ge \frac 12 r^{-\hat\alpha} \quad\,\,\mbox{ on } \MB\cap\partial B_r.
	\end{split}
	\end{equation}
	
	For $\varepsilon\in(0,1)$, let
	$$
	V_\varepsilon=2\|v\|_{L^\infty(\MB)}\bW^+ +\varepsilon \bW^-.
	$$
	Then $V_\varepsilon$ satisfies
	(\ref{PD-Ineq-BarW-corner})--(\ref{BarW-ONgam2ineq-barrierEL-bc}).
	Also, by (\ref{comparFunctLarge}) and since $\bW^\pm> 0$ in $\MB$,
	for each $\varepsilon\in (0,1)$, there exists $R_\varepsilon\in (0,1)$ so that, for each
	$R\in (0, R_\varepsilon]$,
	\begin{equation*}
	\begin{split}
	V_\varepsilon&\ge \|v\|_{L^\infty(\MB)}  \qquad\mbox{ on } (\partial\MB\cap\partial B_1)\cup
	(\MB\cap\partial B_R).
	\end{split}
	\end{equation*}
	Then, by the comparison principle, for each
	$R\in (0, R_\varepsilon]$,
	$$
	v\le V_\varepsilon\qquad\mbox{ in } \MB\setminus \overline B_R.
	$$
	By a similar argument,
	$$
	v\ge -V_\varepsilon\qquad\mbox{ in } \MB\setminus \overline B_R.
	$$
	Combining these two estimates together, sending $R\to 0+$ for each $\varepsilon$, and then
	sending $\varepsilon\to 0+$, we obtain
	$$
	|v|\le \bW^+\qquad\mbox{ in } \MB.
	$$
	Then, using that $\frac 12\le \bar h(\theta)\le 1$, we have
	$$
	|v(r, \theta)|\le 2\|v\|_{L^\infty(\MB)} r^{\hat\alpha}  \qquad\mbox{in $\MB$}.
	$$
	This completes the proof.
\end{proof}

~\\ \textbf{Acknowledgements}.
The research of Gui-Qiang G. Chen was supported in part by
the UK
Engineering and Physical Sciences Research Council Award
EP/L015811/1 and the Royal Society--Wolfson Research Merit Award (UK).
The research of Mikhail Feldman was
supported in part by the National Science Foundation under Grant DMS-1764278 and DMS-1401490,
and the Van Vleck
Professorship Research Award  by the University of Wisconsin-Madison,
The research of Jingchen Hu was supported
by China's Scholarship Council (201306340088).
The research of Wei Xiang was supported in part
by the Research Grants
Council of the HKSAR, China (Project CityU 21305215,
Project CityU 11332916, Project CityU 11304817, and Project CityU11303518).

\end{document}